



\documentclass [12pt,a4paper,reqno]{amsart}
\textwidth 140mm \textheight 195mm
 \sloppy
\usepackage{amsfonts}
 \usepackage{amssymb}
\catcode`\@=11

 \def\AMSTeXfeatures{\Plainheads 
   \let\current@vert=\AMS@vert}

 \def\Plainheads{\sh@ftdiam=0.05em
   \getlabeldims
   \let\vshaftfill=\plnvsolidfill
   \let\hshaftfill=\plnhsolidfill
   \let\th@rhead=\plnrhead
   \let\th@lhead=\plnlhead
   \let\th@dnhead=\plndnhead
   \let\th@uphead=\plnuphead}
 
 \def\glet{\global\let}

 \def\LaTeXfeatures{\catcode`\@=11
   \ifx\@clnwd\undefined \nol@g
      \input ltxcode.tex \dol@g \fi
   \ltxheads \let\current@vert=\new@vert
   \providelto \catcode`\@=\active}

 \def\nol@g{\def\wlog{\edef\garbage}}
 \def\dol@g{\let\wlog=\wl@g} \let\wl@g=\wlog
 \nol@g 

 \newbox\ltobox
 \def\providelto{{\setbox\z@=
   \hbox{$\to$}\minharrlen=\wd\z@
   \global\setbox\ltobox=\hbox{$\activeat>>>$}}
   \def\lto{\mathrel{\copy\ltobox}}}

 \def\ltxheads{\sh@ftdiam=\@wholewidth
   \getlabeldims
   \let\vshaftfill= \ltxvsolidfill
   \let\hshaftfill=\ltxhsolidfill
   \let\th@rhead=\ltxrhead
   \let\th@lhead=\ltxlhead
   \let\th@dnhead=\ltxdnhead
   \let\th@uphead=\ltxuphead}
 {\catcode`\@=\active
   \gdef@#1{\csname #1\string@at\endcsname}
   \glet\activeat=@}
 \def\def@#1{\expandafter\def\csname #1@at\endcsname}

 \def@>#1>#2>{\@rrow R{#1}{#2}}
 \def@<#1<#2<{\@rrow L{#1}{#2}}
 \def@ V#1V#2V{\@rrow V{#1}{#2}}
 \def@ A#1A#2A{\@rrow A{#1}{#2}}
 \def@/#1/#2/#3/{\@rrow{#1}{#2}{#3}}
 \def@.{\ifodd\row\ifmmode\noharrow
     \else\leavevmode.\spacefactor3000 \fi
   \else\novarrow\fi}
 \def@={\ifodd\row\harrow\hequalfill{}{}%
   \else\varrow\vequalfill{}{}\fi}
 \def@:#1{\ifx=#1\harrow\deffill{}{}%
   \else\leavevmode\null:#1\fi}
 \def@|{\current@vert}
  \def\AMS@vert{\varrow\vequalfill{}{}}
  \def\new@vert#1|#2|{\ifodd\row
   \let\nextarrow\vertexvarrow
   \else\let\nextarrow\varrow\fi
   \nextarrow\vshaftfill{#1}{#2}}
 \def@-{\ifmmode\let\next\hl@ne
   \else\let\next\AMSatdash \fi \next}
  \def\hl@ne#1-#2-{\harrow\hshaftfill{#1}{#2}}
  \def\AMSatdash{\let\next\relax\leavevmode
    \def\next@{\ifx\next-%
      \def\next-{\futurelet\next\nextii@}%
     \else\def\next{\hbox{-}}\fi\next}%
    \def\nextii@{\ifx\next-\def\next-{\hbox{---}}%
      \else\def\next{\hbox{--}}\fi\next}%
    \futurelet\next\next@}
 \def@(#1){\tweenarrows{#1}}
 \def@[#1]{\setsp@n#1\relax\activeat}
 \def\fiberbox{\hbox{$\vcenter{\hr@le\hbox{\vr@le
   \kern1ex\vbox{\kern1.2ex}\vr@le}\hr@le}$}}
  \def\hr@le{\hrule height \sh@ftdiam}
  \def\vr@le{\vrule width \sh@ftdiam}
 \def@+#1+#2+#3+{\ifodd\row \harrow{#1}{#2}{#3}%
   \else \varrow{#1}{#2}{#3}\fi}


 \def\Dnarrfill{\vequalfill\Dnhe@d}
 \def\Uparrfill{\Uphe@d\vequalfill}
 
 \def\ontofill{\rtarrfill\kern-0.3em 
   \th@rhead\kern 0.3em} 

 \def\rtarrfill{\hshaftfill\th@rhead}
 \def\ltarrfill{\th@lhead\hshaftfill}
 \def\dnarrfill{\vshaftfill\th@dnhead}
 \def\uparrfill{\th@uphead\vshaftfill}
 \def\hequalfill{\plnhfill=}
 \def\deffill{:\plnhfill=}
 \def\plnvextfill#1{\setbox\z@
   \hbox{\the\textfont3 #1}%
   \dimen@=\dp\z@\advance\dimen@\ht\z@
   \copy\z@ \kern-\dimen@ 
   \cleaders\copy\z@ \vfill
   \kern-\dimen@ 
   \box\z@}
 \def\plnhfill#1{$\m@th\mkern-1.5mu\mathord#1\mkern-6mu
    \cleaders\hbox{$\mkern-2mu\mathord#1\mkern-2mu$}\hfill
    \mkern-6mu\mathord#1\mkern-1.5mu$}
 \def\vequalfill{\plnvextfill{\char'167}}
 \def\plnvsolidfill{\plnvextfill{\char'077}}
 \def\plnhsolidfill{\plnhfill-}
 \def\ltxhsolidfill{\leaders\hrule height\topofshaft depth\botofshaft
   \hfill}
 \def\ltxvsolidfill{\leaders\vrule width\sh@ftdiam\vfill}
 \def\hdashfill{\hd@sh\wd@sh
   \xleaders \hbox{\wd@sh\hd@sh\wd@sh}\hfill
   \wd@sh\hd@sh}
 \def\vdashfill{\vd@sh\wd@sh
   \xleaders \vbox{\wd@sh\vd@sh\wd@sh}\vfill
   \wd@sh\vd@sh}
 \def\dashed{\ifinmeasureCD\else
    \ifodd\row\option{\let\hshaftfill=\hdashfill}%
   \else\option{\let\vshaftfill=\vdashfill}\fi\fi}


 \newdimen\CDstrutht  \newdimen\CDstrutdp
   \CDstrutht=0.875\baselineskip
   \CDstrutdp=0.375\baselineskip
 \newdimen\CDstrutlen \CDstrutlen=\CDstrutht
   \advance\CDstrutlen by \CDstrutdp

 \def\CDstrut{\vrule
   height \ifnum\row=1 \z@\else\CDstrutht \fi
   depth \ifnum\row=\numrows \z@ \else\CDstrutdp \fi
   width\z@}

 \newdimen\CDarrsurr \CDarrsurr=0.375em
 \newdimen\CDdashlen
    \CDdashlen= 0.1875\baselineskip
 \newdimen\CDvarrlen \CDvarrlen=1.5\baselineskip
 \newdimen\minharrlen 
  \setbox\z@\hbox{$\longrightarrow$} \minharrlen=\wd\z@
 \newdimen\minCDharrlen \minCDharrlen=2.5em 
\newdimen \minc@lwd
\def\findminc@lwd{\minc@lwd=2\CDarrsurr
  \advance\minc@lwd\minCDharrlen}

 \newdimen\sh@ftdiam


 \newdimen\labelsurr \labelsurr=1.25 em

\newcount\sp@ncnt \sp@ncnt=\@ne
\newcount\sp@ncnt@ \sp@ncnt@=\@ne
\newdimen\@rrwd \newdimen\@rrdp


 \def\adjustbot#1{\option{\advance\@rrdp#1\relax}}
 
\def\pushvertex#1{\global\p@shlen#1\relax
   \global\let\maybepush=\dopush}


 \newdimen\p@shlen \p@shlen=\z@

 
 \let\maybepush=\relax
 \def\dopush{\ifinmeasureCD 
   \advance\locdimen by -\p@shlen 
   \else\advance \@rrwd by -\p@shlen \fi 
   \global\let\maybepush=\relax \global\p@shlen=\z@\relax}


 \def\span@ne{\global\sp@ncnt=\@ne\relax}
 \def\setsp@n#1#2{\global\sp@ncnt=#1\relax
   \ifx\relax#2\relax\else\global\sp@ncnt@=#2\relax\fi}

 \def\plnrhead{\llap{$\rightarrow\mkern-1.5mu$}}
 \def\plnlhead{\rlap{$\mkern-1.5mu\leftarrow$}}

 \def\clap#1{\hbox to \z@{\hss #1\hss}}

 \def\plndnhead{\hbox{\the\textfont3 \char'171}}
 \def\plnuphead{\hbox{\the\textfont3 \char'170}}
 \def\Dnhe@d{\hbox{\the\textfont3 \char'177}}
 \def\Uphe@d{\hbox{\the\textfont3 \char'176}}

 \def\ltxrhead{\raise\@xisheight
   \llap{\smash{\@linefnt\@getrarrow(1,0)}}}
 \def\ltxlhead{\raise\@xisheight
   \rlap{\@linefnt\@getlarrow(-1,0)}}
 \def\ltxuphead{\setbox\z@=\rlap{%
   \kern\@halfwidth\@linefnt\char'66}%
   \copy\z@\kern-\ht\z@}
 \def\ltxdnhead{\setbox\z@=\rlap{%
   \kern\@halfwidth\@linefnt\char'77}%
   \ht\z@=\z@\box\z@}

 \def\wd@sh{\kern0.5\CDdashlen}
 \def\hd@sh{\vrule height\topofshaft depth\botofshaft
    width\CDdashlen}
 \def\vd@sh{\hrule height\CDdashlen
   depth\z@ width\sh@ftdiam}

\def\xylist{14{3434}13{2414}12{1723}%
  23{1413}34{1153}11{0867}43{0707}%
  32{0580}21{0414}31{0291}41{0}}
\newcount\tgtcnt@
\def\find@xyargs{\dimen@=\@rrdp
  \advance\dimen@ by \CDstrutlen
  \tgtcnt@=\dimen@ \dimen@=\@rrwd 
  \divide\dimen@ by \@m 
  \divide \tgtcnt@ by \dimen@ 
  \expandafter\testxy\xylist\relax
  \unitlength=\@xarg\@rrdp
  \divide\unitlength by\@yarg\relax}
\def\testxy#1#2#3{\ifnum\tgtcnt@>#3
    \@xarg=#1\relax \@yarg=#2\relax
    \let\next=\ignorerest
  \else\let\next\testxy\fi\next}
\def\ignorerest#1\relax{\relax}

\let\scalefactor=\@ne
\def\SWarrow{\find@xyargs\vector
  (-\@xarg,-\@yarg)\scalefactor\hskip-\wd\@linechar}
\def\NWarrow{\find@xyargs\vector
  (-\@xarg,\@yarg)\scalefactor\hskip-\wd\@linechar}
\def\NEarrow{\find@xyargs\vector
  (\@xarg,\@yarg)\scalefactor}
\def\SEarrow{\find@xyargs\vector
  (\@xarg,-\@yarg)\scalefactor}
\def\rightupline{\find@xyargs\@linelen=\scalefactor
     \unitlength\@sline}
\def\rightdownline{\find@xyargs\@yarg=-\@yarg\relax
     \@linelen=\scalefactor\unitlength\@sline}

\def\Sim{\ifodd\row\setbox\z@=\hbox{$\sim$}\dimen@=\ht\z@
 \advance\dimen@ by -\@xisheight
  \vbox{\box\z@\kern-\@xisheight\kern\dimen@}%
  \else\hbox{$\wr$}\fi}

%
\def\harrow#1#2#3{\inmeasureCDtrue\findminarrwd
  {#2}{#3}{\sp@ncnt\minharrlen}\inmeasureCDfalse\span@ne
  \mathrel{\hbox{\options\hplace{#1}\ulabel{#2}\dlabel{#3}}}}

\def\noharrow{\harrow\hfill{}{}}
\def\vertexvarrow#1#2#3{\findarrdp \@rrwd=\z@ \setsp@n\@ne\@ne
  \vbox to \z@{\kern-1.2\CDstrutht
  \rlap{\options\vplace{#1}\llabel{#2}\rlabel{#3}}\vss}}

\newif\ifinmeasureCD
\def\measurelabel#1{\setbox\z@
  \hbox{$\scriptstyle#1\kern\labelsurr$}%
  \ifdim\wd\z@>\@rrwd \@rrwd=\wd\z@\fi}
\def\findminarrwd#1#2#3{\@rrwd=#3\relax
   \measurelabel{#1}\measurelabel{#2}}
\def\findCDarrwd#1#2{\@rrwd=\minCDharrlen
   \measurelabel{#1}\measurelabel{#2}%
  }

\newcount\row \row=\@ne \newcount\col \col=\@ne 
 \newcount\numrows 
\numrows=\@ne
 \newcount\numcols
\newcount\arrspan \newdimen\vrtxhalfwd  \newbox\tempbox

\def\DANABUG{\advance\col by \@ne
 \@rrwd=\minCDharrlen
  \advance\@rrwd by \vrtxhalfwd
  \advance\@rrwd by \CDarrsurr
  \ifnum\col>\numcols \numcols=\col
     \newlocdimen{col\the\col}\locdimen=\@rrwd 
  \else \ifdim\@rrwd>\c@l \c@l=\@rrwd\fi\fi}

\def\drop#1\\{
  \findvrtxhalfsum\DANABUG\advance\row by 2 \measureinit}

\def\measureinit{\col=\@ne \vrtxhalfwd=-\CDarrsurr\arrspan=\@ne\@rrwd=\z@
   \setbox\tempbox=\hbox\bgroup$}
\def\measure{
  \let\harrow\measureCDarrow
  \let\CDCR=\measureCR 
   \findminc@lwd 
  \inmeasureCDtrue
  \row=\@ne \numcols=\z@ \measureinit}

\def\endmeasure{\findvrtxhalfsum\DANABUG
  \numrows=\row 
  \inmeasureCDfalse}




\def\newlocdimen#1{\advance\dimenc@unt by \@ne
  \ifnum\dimenc@unt<\insc@unt
     \else\errmessage{No room for the CD}\fi
  \dimendef\locdimen=\dimenc@unt
  \expandafter\dimendef\csname#1\endcsname=\dimenc@unt}

 \def\r@wc@l{\csname row\the\row col\the\col\endcsname}
 \def\c@l{\csname col\the\col\endcsname}

 \def\findvrtxhalfsum{$\egroup
  \newlocdimen{row\the\row col\the\col}
  \locdimen=\vrtxhalfwd 
  \vrtxhalfwd=0.5\wd\tempbox 
  \advance\vrtxhalfwd by \CDarrsurr
  \advance\locdimen by \vrtxhalfwd 
  \advance\@rrwd by \locdimen 
  \maybepush
  \divide\@rrwd by \arrspan\relax
  \ifdim\@rrwd<\minc@lwd
    \ifnum\col>\@ne \@rrwd=\minc@lwd\fi \fi
  \loop 
    \ifnum\col>\numcols \numcols=\col
       \newlocdimen{col\the\col}
       \locdimen=\@rrwd 
    \else \ifdim\@rrwd>\c@l \c@l=\@rrwd\fi \fi
   \ifnum\arrspan>\@ne
      \advance\arrspan by -1 \advance\col by \@ne
  \repeat }

 \def\measureCDarrow#1#2#3{\findvrtxhalfsum
   \arrspan=\sp@ncnt\relax\global\sp@ncnt=1\relax
   \advance\col by \@ne
   \findCDarrwd{#2}{#3}%
   \setbox\tempbox=\hbox\bgroup$}

 \newcount\dr@tn \dr@tn=\z@
 \def\locate#1:#2{\ifinmeasureCD\else
   \count@=-#1
   \multiply\count@ by 2
   \advance\count@ by #2
   \dimen@=\count@\@rrwd
   \ifnum\dr@tn=\@ne\relax \else\dimen@=-\dimen@ \fi
   \dimen@i=\@rrdp
   \ifnum\dr@tn>\z@\advance\dimen@i by \CDstrutlen \fi
   \dimen@i=\count@\dimen@i
   \count@=#2 \multiply\count@ by 2
   \divide\dimen@ by \count@
   \divide\dimen@i by \count@
   \lift\dimen@i\nudge\dimen@\fi}

\def\betweenCDrows{\advance\row by \@ne \col=\@ne
\options}


\def\hbegin{\hbox\bgroup\kern\c@l \kern-\r@wc@l$}
\def\hend{$\glet\maybepush\relax \CDstrut\egroup}
\def\vbegin{\setbox\tempbox=\hbox\bgroup$}
\def\vend{$\egroup\ht\tempbox=\z@\dp\tempbox\CDvarrlen
  \box\tempbox}
\def\setCD{\let\harrow=\setCDarrow
  \let\CDCR=\setCR 
  \row=\@ne \col=\@ne \hbegin}
\let\endsetCD=\hend 

\def\findarrwd{\@rrwd=\z@ \count@=\col \advance\count@ by\sp@ncnt
  \loop\ifnum\count@>\col \advance\count@ by -1
      \advance\@rrwd by\csname col\the\count@\endcsname\repeat}
\def\setCDarrow#1#2#3{\kern\CDarrsurr\advance\col by \@ne
  \findarrwd \advance\@rrwd by -\r@wc@l  
  \@rrdp=\z@ 
  \maybepush
  \advance\col by -\@ne \advance\col by \sp@ncnt \span@ne
  \hbox to \@rrwd{\options
   \@rrwd=\scalefactor\@rrwd\hss
   \hplace{#1}\ulabel{#2}\dlabel{#3}\hss}%
   \kern\CDarrsurr}

\newdimen\labspacei 
\newdimen\labspaceii 

\newdimen\@xisheight
  \@xisheight=\the\fontdimen22\textfont2
\newdimen\labelskip
  \labelskip=\the\fontdimen10\textfont3 
\newdimen\topofshaft
\newdimen\botofshaft
\newdimen\botofulabel
\newdimen\topofdlabel
\def\getlabeldims{
  \topofshaft=0.5\sh@ftdiam
  \botofshaft=\topofshaft
  \advance\topofshaft by \@xisheight  
  \advance\botofshaft by -\@xisheight  
  \botofulabel=\topofshaft
  \advance\botofulabel by \labelskip
  \topofdlabel=\botofshaft
  \advance\topofdlabel by \labelskip}

\def\ulabel{\ifnum\row=\@ne\let\next\ulabeli
   \else\let\next\ulabellap\fi\next}
\def\ulabeli#1{\vbox{
  \clap{\kern-\@rrwd$\scriptstyle#1$}%
  \kern\botofulabel}\maybeoffset}
\def\ulabellap#1{\vbox to \z@{\vss
  \clap{\kern-\@rrwd$\scriptstyle#1$}%
  \kern\botofulabel}\maybeoffset}
\def\dlabel{\ifnum\row=\numrows\let\next\dlabeli
   \else\let\next\dlabellap\fi\next}
\def\dlabeli#1{\vtop{\kern\topofdlabel
  \clap{\kern-\@rrwd$\scriptstyle#1$}%
  }\maybeoffset}
\def\dlabellap#1{\vbox to \z@{\kern\topofdlabel
  \clap{\kern-\@rrwd$\scriptstyle#1$}%
  \vss}\maybeoffset}
\def\rlabel#1{\vbox to \z@{\vss
  \rlap{\kern\labelskip$\scriptstyle#1$}%
  \vss\kern-\@rrdp}\maybeoffset}
\def\llabel#1{\vbox to \z@{\vss
  \llap{$\scriptstyle#1$\kern\labelskip}%
  \vss\kern-\@rrdp}\maybeoffset}
\def\swlabel#1{\vtop{\kern0.5\@rrdp
  \llap{$\scriptstyle#1$\kern\labelskip\kern-0.5\@rrwd}
  }\maybeoffset}
\def\nwlabel#1{\vbox{
  \llap{$\scriptstyle#1$\kern\labelskip\kern-0.5\@rrwd}%
  \kern-0.5\@rrdp}\maybeoffset}
\def\selabel#1{\vtop{\kern0.5\@rrdp
  \rlap{\kern0.5\@rrwd\kern\labelskip$\scriptstyle#1$}%
  }\maybeoffset}
\def\nelabel#1{\vbox{
  \rlap{\kern0.5\@rrwd\kern\labelskip$\scriptstyle#1$}%
  \kern-0.5\@rrdp}\maybeoffset}
\def\cplace#1{\vbox to \z@{\vss
  \clap{$#1$\kern-\@rrwd}%
  \kern-\@rrdp\vss}\maybeoffset}
\def\hplace#1{\hbox to \@rrwd{#1}\maybeoffset}
\def\vplace#1{\clap{\vbox to \z@{#1\kern-\@rrdp}}\maybeoffset}

\newdimen\nudgeamount \nudgeamount=\z@
\newdimen\liftamount \liftamount=\z@
\let\maybeoffset\relax
\newbox\offsetbox \newdimen\lastheight
\def\dooffset{
  \setbox\offsetbox=\lastbox \lastheight=\ht\offsetbox 
  \setbox\offsetbox=\vbox{\kern-\liftamount\box\offsetbox}%
  \ht\offsetbox=\lastheight
  \kern\nudgeamount\box\offsetbox\kern-\nudgeamount
  \global\nudgeamount=\z@ \global\liftamount=\z@
  \glet\maybeoffset=\relax}
\def\nudge#1{\ifinmeasureCD\else
  \global\advance\nudgeamount#1\relax
  \global\let\maybeoffset\dooffset\fi}
\def\lift#1{\ifinmeasureCD\else
  \global\advance\liftamount#1\relax
  \global\let\maybeoffset\dooffset\fi}

\def\findarrdp{\@rrdp=\CDvarrlen
  \ifnum\sp@ncnt@>1
    \advance\@rrdp by \CDstrutlen
    \multiply\@rrdp by \sp@ncnt@
    \advance\@rrdp by -\CDstrutlen \fi
 }

\def\varrow#1#2#3{\ifnum\sp@ncnt>\@ne 
     \sp@ncnt@=\sp@ncnt\relax\fi
  \findarrdp \@rrwd=\z@ 
  \kern\c@l
   \hbox to \z@{\options
   \@rrdp=\scalefactor\@rrdp
    \hss\vplace{#1}\llabel{#2}\rlabel{#3}\hss}%
  \global\advance\col by \@ne \setsp@n\@ne\@ne
  }

\def\novarrow{\varrow\vfill{}{}}

\def\tweenarrows#1{\findarrwd \findarrdp \setsp@n\@ne\@ne
  \rlap{\options\cplace{#1}}}

\def\usarrow #1#2#3{\dr@tn=\@ne
  \findarrwd \findarrdp \setsp@n\@ne\@ne 
  \rlap{\options\cplace{#1}\nwlabel{#2}\selabel{#3}}%
  \dr@tn=\z@}
\def\dsarrow #1#2#3{\dr@tn=\tw@
  \findarrwd \findarrdp \setsp@n\@ne\@ne 
  \rlap{\options\cplace{#1}\swlabel{#2}\nelabel{#3}}%
  \dr@tn=\z@}
 \def\@rrow#1{\csname #1@rrow\endcsname}
 \def\R@rrow{\harrow \rtarrfill}
 \def\L@rrow{\harrow \ltarrfill}
 \def\V@rrow{\varrow \dnarrfill}
 \def\A@rrow{\varrow \uparrfill}
 \def\SE@rrow{\dsarrow \SEarrow}
 \def\NW@rrow{\dsarrow \NWarrow}
 \def\SW@rrow{\usarrow \SWarrow}
 \def\NE@rrow{\usarrow \NEarrow}
 \def\DS@rrow{\dsarrow \dnslope}
 \def\US@rrow{\usarrow \upslope}
 \def\upslope{\find@xyargs
       \@linelen=\unitlength\@sline}
 \def\dnslope{\find@xyargs\@yarg=-\@yarg\relax
       \@linelen=\unitlength\@sline}

\newtoks\optionlist 
\optionlist={}
\let\options\relax
\def\dooptions{\the\optionlist\global\optionlist={}%
  \glet\options=\relax}
\def\option#1{\ifinmeasureCD\else
  \glet\options=\dooptions
  \global\optionlist=\expandafter{\the\optionlist\relax#1}\fi}
\def\wider#1{\ifinmeasureCD\else
   \option{\advance\@rrwd by #1}\fi}
\def\deeper#1{\ifinmeasureCD\else
   \option{\advance\@rrdp by #1}\fi}


{\def\\{\global\let\sptoken= }\\ }

\def\CR{\futurelet\nexttok\testCR}
\def\testCR{\ifx\nexttok\sptoken
   \let\next\eatspaceCR\else\let\next\CDCR\fi\next}
\def\eatspaceCR#1 {\CR}
\def\measureCR{\ifx\nexttok\endmeasure\let\nextCR\relax
    \else\let\nextCR\drop\fi\nextCR}
\def\setCR{\ifodd\row
  \ifx\nexttok\endsetCD\else\hend\betweenCDrows\vbegin\fi
  \else\vend\betweenCDrows\hbegin\fi}

\countdef\dimenc@unt=11
\def\CD#1\endCD{
   \begingroup\let\\=\CR
  \m@th\offinterlineskip
   \measure#1\endmeasure\null\,\vcenter{\setCD#1\endsetCD}\,
   \endgroup
    }

\ifx\@clnwd\undefined \nol@g\else\catcode`\ =14\relax\fi
 \font\@linefnt=line10 
 \newcount\@tempcnta
 \newcount\@tempcntb
 \newdimen\@tempdima
 \newdimen\@tempdimb
 \newdimen\@wholewidth
 \newdimen\@halfwidth
   \@wholewidth\fontdimen8\@linefnt \@halfwidth .5\@wholewidth
 \newdimen\unitlength
 \newcount\@xarg
 \newcount\@yarg
 \newcount\@yyarg
 \newbox\@linechar
 \newdimen\@linelen
 \newdimen\@clnwd
 \newdimen\@clnht
 \newif\if@negarg
 
 \def\@whilenoop#1{}

 \def\@whiledim#1\do #2{\ifdim #1\relax#2\@iwhiledim{#1\relax#2}\fi}

 \def\@iwhiledim#1{\ifdim #1\let\@nextwhile=\@iwhiledim 
         \else\let\@nextwhile=\@whilenoop\fi\@nextwhile{#1}}

 \def\@sline{\ifnum\@xarg< 0 \@negargtrue \@xarg -\@xarg \@yyarg -\@yarg
   \else \@negargfalse \@yyarg \@yarg \fi
 \ifnum \@yyarg >0 \@tempcnta\@yyarg \else \@tempcnta -\@yyarg \fi
 \ifnum\@tempcnta>6 \@badlinearg\@tempcnta0 \fi
 \ifnum\@xarg>6 \@badlinearg\@xarg 1 \fi
 \setbox\@linechar\hbox{\@linefnt\@getlinechar(\@xarg,\@yyarg)}%
 \ifnum \@yarg >0 \let\@upordown\raise \@clnht\z@
    \else\let\@upordown\lower \@clnht \ht\@linechar\fi
 \@clnwd=\wd\@linechar
 \if@negarg \hskip -\wd\@linechar \def\@tempa{\hskip -2\wd\@linechar}\else
      \let\@tempa\relax \fi
 \@whiledim \@clnwd <\@linelen \do
   {\@upordown\@clnht\copy\@linechar
    \@tempa
    \advance\@clnht \ht\@linechar
    \advance\@clnwd \wd\@linechar}%
 \advance\@clnht -\ht\@linechar
 \advance\@clnwd -\wd\@linechar
 \@tempdima\@linelen\advance\@tempdima -\@clnwd
 \@tempdimb\@tempdima\advance\@tempdimb -\wd\@linechar
 \if@negarg \hskip -\@tempdimb \else \hskip \@tempdimb \fi
 \multiply\@tempdima \@m
 \@tempcnta \@tempdima \@tempdima \wd\@linechar \divide\@tempcnta \@tempdima
 \@tempdima \ht\@linechar \multiply\@tempdima \@tempcnta
 \divide\@tempdima \@m
 \advance\@clnht \@tempdima
 \ifdim \@linelen <\wd\@linechar
    \hskip \wd\@linechar
   \else\@upordown\@clnht\copy\@linechar\fi}
 
 \def\@getlinechar(#1,#2){\@tempcnta#1\relax\multiply\@tempcnta 8
 \advance\@tempcnta -9 \ifnum #2>0 \advance\@tempcnta #2\relax\else
 \advance\@tempcnta -#2\relax\advance\@tempcnta 64 \fi
 \char\@tempcnta}
 
 \def\vector(#1,#2)#3{\@xarg #1\relax \@yarg #2\relax
 \@tempcnta \ifnum\@xarg<0 -\@xarg\else\@xarg\fi
 \ifnum\@tempcnta<5\relax
 \@linelen=#3\unitlength
 \ifnum\@xarg =0 \@vvector 
   \else \ifnum\@yarg =0 \@hvector \else \@svector\fi
 \fi
 \else\@badlinearg\fi}
 
 \def\@svector{\@sline
 \@tempcnta\@yarg \ifnum\@tempcnta <0 \@tempcnta=-\@tempcnta\fi
 \ifnum\@tempcnta <5
   \hskip -\wd\@linechar
   \@upordown\@clnht \hbox{\@linefnt  \if@negarg 
   \@getlarrow(\@xarg,\@yyarg) \else \@getrarrow(\@xarg,\@yyarg) \fi}%
 \else\@badlinearg\fi}
 
 \def\@getlarrow(#1,#2){\ifnum #2 =\z@ \@tempcnta='33\else
 \@tempcnta=#1\relax\multiply\@tempcnta \sixt@@n \advance\@tempcnta
 -9 \@tempcntb=#2\relax\multiply\@tempcntb \tw@
 \ifnum \@tempcntb >0 \advance\@tempcnta \@tempcntb\relax
 \else\advance\@tempcnta -\@tempcntb\advance\@tempcnta 64
 \fi\fi\char\@tempcnta}
 
 \def\@getrarrow(#1,#2){\@tempcntb=#2\relax
 \ifnum\@tempcntb < 0 \@tempcntb=-\@tempcntb\relax\fi
 \ifcase \@tempcntb\relax \@tempcnta='55 \or 
 \ifnum #1<3 \@tempcnta=#1\relax\multiply\@tempcnta
 24 \advance\@tempcnta -6 \else \ifnum #1=3 \@tempcnta=49
 \else\@tempcnta=58 \fi\fi\or 
 \ifnum #1<3 \@tempcnta=#1\relax\multiply\@tempcnta
 24 \advance\@tempcnta -3 \else \@tempcnta=51\fi\or 
 \@tempcnta=#1\relax\multiply\@tempcnta
 \sixt@@n \advance\@tempcnta -\tw@ \else
 \@tempcnta=#1\relax\multiply\@tempcnta
 \sixt@@n \advance\@tempcnta 7 \fi\ifnum #2<0 \advance\@tempcnta 64 \fi
 \char\@tempcnta}
\catcode`\ =10

\dol@g 
\catcode`\@=\active
\LaTeXfeatures

\pagestyle{plain}

\newcommand{\F}{{\mathbb F}}

\newcommand{\Q}{{\mathbb Q}}

\pagestyle{headings}

\numberwithin{equation}{section}

\DeclareMathOperator{\Mod}{mod}

\begin{document}

\newtheorem{theorem}{Theorem}[section]
\newtheorem{lemma}[theorem]{Lemma}
\newtheorem{prop}[theorem]{Proposition}
\newtheorem{proposition}[theorem]{Proposition}
\newtheorem{corollary}[theorem]{Corollary}
\newtheorem{corol}[theorem]{Corollary}
\newtheorem{conj}[theorem]{Conjecture}

\theoremstyle{definition}
\newtheorem{defn}[theorem]{Definition}
\newtheorem{example}[theorem]{Example}
\newtheorem{examples}[theorem]{Examples}
\newtheorem{remarks}[theorem]{Remarks}
\newtheorem{remark}[theorem]{Remark}
\newtheorem{algorithm}[theorem]{Algorithm}
\newtheorem{question}[theorem]{Question}
\newtheorem{problem}[theorem]{Problem}
\newtheorem{subsec}[theorem]{}
\newtheorem{acknowledgements}[theorem]{Acknowledgements \nonumber}

\def\toeq{{\stackrel{\sim}{\longrightarrow}}}
\def\into{{\hookrightarrow}}


\def\alp{{\alpha}}  \def\bet{{\beta}} \def\gam{{\gamma}}
 \def\del{{\delta}}
\def\eps{{\varepsilon}}
\def\kap{{\kappa}}                   \def\Chi{\text{X}}
\def\lam{{\lambda}}
 \def\sig{{\sigma}}  \def\vphi{{\varphi}} \def\om{{\omega}}
\def\Gam{{\Gamma}}   \def\Del{{\Delta}}
\def\Sig{{\Sigma}}   \def\Om{{\Omega}}
\def\ups{{\upsilon}}


\def\F{{\mathbb{F}}}
\def\BF{{\mathbb{F}}}
\def\BN{{\mathbb{N}}}
\def\Q{{\mathbb{Q}}}
\def\Ql{{\overline{\Q }_{\ell }}}
\def\CC{{\mathbb{C}}}
\def\R{{\mathbb R}}
\def\V{{\mathbf V}}
\def\D{{\mathbf D}}
\def\BZ{{\mathbb Z}}
\def\K{{\mathbf K}}
\def\XX{\mathbf{X}^*}
\def\xx{\mathbf{X}_*}

\def\AA{\Bbb A}
\def\BA{\mathbb A}
\def\HH{\mathbb H}
\def\PP{\Bbb P}

\def\Gm{{{\mathbb G}_{\textrm{m}}}}
\def\Gmk{{{\mathbb G}_{\textrm m,k}}}
\def\GmL{{\mathbb G_{{\textrm m},L}}}
\def\Ga{{{\mathbb G}_a}}

\def\Fb{{\overline{\F }}}
\def\Kb{{\overline K}}
\def\Yb{{\overline Y}}
\def\Xb{{\overline X}}
\def\Tb{{\overline T}}
\def\Bb{{\overline B}}
\def\Gb{{\bar{G}}}
\def\Ub{{\overline U}}
\def\Vb{{\overline V}}
\def\Hb{{\bar{H}}}
\def\kb{{\bar{k}}}

\def\Th{{\hat T}}
\def\Bh{{\hat B}}
\def\Gh{{\hat G}}

\def\cF{{\mathfrak{F}}}
\def\cC{{\mathcal C}}
\def\cU{{\mathcal U}}

\def\Xt{{\widetilde X}}
\def\Gt{{\widetilde G}}

\def\gg{{\mathfrak g}}
\def\hh{{\mathfrak h}}
\def\lie{\mathfrak a}

\def\GL{\textrm{GL}}            \def\Stab{\textrm{Stab}}
\def\Gal{\textrm{Gal}}          \def\Aut{\textrm{Aut\,}}
\def\Lie{\textrm{Lie\,}}        \def\Ext{\textrm{Ext}}
\def\PSL{\textrm{PSL}}          \def\SL{\textrm{SL}}
\def\loc{\textrm{loc}}
\def\coker{\textrm{coker\,}}    \def\Hom{\textrm{Hom}}
\def\im{\textrm{im\,}}           \def\int{\textrm{int}}
\def\inv{\textrm{inv}}           \def\can{\textrm{can}}
\def\Cl{\textrm{Cl}}
\def\Sz{\textrm{Sz}}
\def\ad{\textrm{ad\,}}
\def\SU{\textrm{SU}}
\def\PSL{\textrm{PSL}}
\def\PSU{\textrm{PSU}}
\def\rk{\textrm{rk}}
\def\PGL{\textrm{PGL}}
\def\Ker{\textrm{Ker}}
\def\Ob{\textrm{Ob}}
\def\Var{\textrm{Var}}
\def\poSet{\textrm{poSet}}
\def\Al{\textrm{Al}}
\def\Int{\textrm{Int}}
\def\Mod{\textrm{Mod}}
\def\Smg{\textrm{Smg}}
\def\ISmg{\textrm{ISmg}}
\def\Ass{\textrm{Ass}}
\def\Grp{\textrm{Grp}}
\def\Com{\textrm{Com}}
\def\Im{\textrm{Im}}
\def\Val{\textrm{Val}}
\def\LKer{\textrm{LKer}}
\def\Val{\textrm{Val}}
\def\Th{\textrm{Th}}
\def\Set{\textrm{Set}}
\def\Hal{\textrm{Hal}}
\def\Lat{\textrm{Lat}}
\def\LK{\textrm{LK}}

\def\tors{_\def{\textrm{tors}}}      \def\tor{^{\textrm{tor}}}
\def\red{^{\textrm{red}}}         \def\nt{^{\textrm{ssu}}}

\def\sss{^{\textrm{ss}}}          \def\uu{^{\textrm{u}}}
\def\mm{^{\textrm{m}}}
\def\tm{^\times}                  \def\mult{^{\textrm{mult}}}

\def\uss{^{\textrm{ssu}}}         \def\ssu{^{\textrm{ssu}}}
\def\comp{_{\textrm{c}}}
\def\ab{_{\textrm{ab}}}

\def\et{_{\textrm{\'et}}}
\def\nr{_{\textrm{nr}}}

\def\nil{_{\textrm{nil}}}
\def\sol{_{\textrm{sol}}}
\def\End{\textrm{End\,}}

\def\til{\;\widetilde{}\;}





\title[Decompositions and complexity  of linear automata]
 {Decompositions and complexity of linear automata}

\author[B.Plotkin,T.Plotkin ]{Boris Plotkin and Tatjana Plotkin}

\address{ T.Plotkin: Dept. of Computer Science, Bar-Ilan Univ., Ramat Gan, Israel}
\email{plotkin\copyright macs.biu.ac.il}
\address{B.Plotkin: Dept. of Mathematics, Hebrew Univ., Jerusalem, Israel}
\email{borisov\copyright math.huji.ac.il}


\begin{abstract}

The Krohn-Rhodes complexity theory for pure (without linearity) automata  is well-known.  This theory uses an operation of wreath product as a decomposition tool.
The main goal of the  paper is to introduce the notion of   complexity of linear automata. This notion is ultimately related with decompositions of linear automata. The study of these decompositions is the second objective of the paper.   In order to define complexity for linear automata,  we have to use three operations, namely, triangular product of linear automata, wreath product of pure automata and wreath product of a linear automaton with a pure one which returns a linear automaton. We define the complexity of a linear automaton as the minimal number of operations in the decompositions of the automaton into indecomposable components (atoms). This theory relies on the following parallelism between wreath and triangular products: both of them are terminal objects in the categories of cascade connections of automata. The wreath product is the terminal object in the Krohn-Rhodes theory for pure automata, while the triangular product provides the terminal object for the cascade connections of linear automata.
\end{abstract}

\maketitle


\medskip

\noindent
{\it Keywords:} algebraic model of an automaton, semigroup automaton, Krohn-Rhodes complexity,  cascade connection, wreath product. triangular product.

\smallskip

\noindent
 {\it Mathematics Subject Classification 2010:} 68Q70, 20M35.

\section{Introduction}\label{sec:intro}


A pure semigroup automaton is a three-sorted algebraic structure of the form
 $(A, \Gamma, B)$, where $\Gamma$ is a semigroup, $A$, $B$ are the sets, and the
axioms $a \circ \gamma_1\gamma_2 = (a \circ
\gamma_1)\circ\gamma_2$, $a \ast \gamma_1\gamma_2 = (a \circ
\gamma_1)\ast\gamma_2$ are fulfilled for the operations $\circ: A\times \Gamma\to A$ and $\ast: A\times \Gamma\to B$. The complexity theory of such automata is well known (see, for example \cite{KR1}, \cite{KSt},
\cite{K}). The basis of this theory constitutes the famous Krohn-Rhodes decomposition theory  of
pure (semigroup) automata (see \cite{KR},\cite{KRT},
\cite{E},\cite{RS}, \cite{A},\cite{Es},
 etc.). This theory has many generalizations (\cite{RS}, \cite{VW}, \cite{W}).

 Suppose that we have a (finite) semigroup automaton $\mathcal{A}$. The
Krohn-Rhodes theory  basically says that any semigroup automaton  $\mathcal{A}$
 can be built up by cascading simple group automata, which
divide $\mathcal{A}$, with certain trivial automata, the so-called
"flips-flops". We would prefer to say that the Krohn-Rhodes Theorem
allows to decompose any finite semigroup automaton into
indecomposable (irreducible) bricks, via the construction of cascade
connection of automata. The cascade connection of semigroup automata $\mathcal{A}_1$ and $\mathcal{A}_2$ is tightly related to wreath product of  automata. 
It can be seen that every cascade connection of the  automata is embedded into their wreath product. The wreath product construction
leads to the decomposition of pure automata and to the definition of Krohn-Rhodes complexity (KR-complexity) of the automaton. Recall that
\begin{defn}The Krohn–-Rhodes complexity (group complexity)  of a pure semigroup automaton $\mathcal{A}$  is the least number of group automata in the Krohn-Rhodes decomposition of $\mathcal{A}$.
\end{defn}
The main goal of the  paper is to introduce the notion of   complexity of a linear automaton (see Section \ref{sec:semigr_lin_atm} for the definition). This notion is ultimately related to the decompositions of linear automata. The study of these decompositions is also one of the main objectives of the paper.   In order to define complexity for linear automata,  we have to use three operations, namely, the operations of triangular product of linear automata, wreath product of pure automata and wreath product of a linear automaton with a pure one which returns a linear automaton.

The paper is organized as follows. Our main goal, that is  the definition of complexity of a linear automaton, is considered in Section \ref{sec:Complexity}. The preceding sections serve  the aim to get all notions ready for Section \ref{sec:Complexity}. In particular, Sections \ref{sec:semigr_atm} and \ref{sec:semigr_lin_atm} introduce the notions of a pure semigroup and a linear semigroup automaton, respectively.  Sections \ref{sec:casc_con_pure_atm} - \ref{sec:Decomp} 
are devoted to above-mentioned operations and to the relation of divisibility of automata. Namely, Section \ref{sec:casc_con_pure_atm} deals with the construction of cascade connection of pure automata, in Sections \ref{sec:LA_rep_semigr} and \ref{sec:CC_and_TP} cascade connection of linear automata is considered and, correspondingly, the operation of triangular product of linear automata is discussed, and in Section \ref{sec:WP_LA_PA} the  operation of wreath product of
 a linear automaton and a pure one is treated. Finally, in Section \ref{sec:Decomp} we recall the main decomposition theorems.

In order to make the idea which rules the way to the definition of linear automata complexity more transparent we shall sketch here our strategy.

{\bf Strategy.} Recall that
an automaton $\Lambda_1$ is a divisor of an automaton $\Lambda_2$ if  $\Lambda_1$ is 
a homomorphic image of a sub-automaton of $\Lambda_2$. Denote this relation as $\Lambda_1 | \Lambda_2$. 
A natural way to define the complexity of an automaton is to present this automaton as a divisor of some  product of indecomposable automata (indecomposable with respect to the chosen  operation).

Decomposition operations and decomposition process allows us to determine atoms (indecomposable elements) of the decomposition. Complexity of a linear automaton can be defined either as the minimal number of atoms in the decompositions or as  the minimal number of operations. 
So the whole point is to determine the decomposition operations for the case of linear automata, to find out how the atoms look like, and to define the decomposition process.

Suppose we deal with a linear automaton $\Lambda=(A,\Gamma,B)$ where $A$ and $B$ are finite dimensional vector spaces over a filed $K$ and $\Gamma$ is a finite semigroup. We shall take into account the linear nature of the automaton $\Lambda$, the Krohn-Rhodes complexity of $\Gamma$, and their interaction.

Step 1. {\it Linearity.} Here, the main role is played by the operation of triangular product of automata. There exists a canonical representation of  $\Lambda$ as a divisor of the triangular product of atoms $(A_i,\Gamma)$,  where each $(A_i,\Gamma)$ is a faithful irreducible representation.  {\it Warning}: note that a  linear representation $(A,\Gamma)$, such that $A$ has no invariant subspaces, is irreducible. This is a linear atom in the sense of action of the semigroup $\Gamma$ on $A$. However, it is not an atom of the full decomposition of $\Lambda$ yet, since we have not exhausted the decomposition possibilities using additional operations of the wreath product type and have not worked with $\Gamma$.


Step 2. {\it Compression.} It is known \cite{CP} that there is a correspondence between representations of arbitrary finite semigroups and of completely $0$-simple semigroups. It gives a reduction from arbitrary  $(A,\Gamma)$ to $(A,\Sigma)$, where $\Sigma$ is completely $0$-simple. We call such a process compression.

Step 3. {\it Wreath products.}
Besides decomposition of an automaton into the triangular product we use the construction of wreath products of the form $(A,\Gamma, B) wr (Y,\Gamma')$, where $Y$ is a set and $\Gamma'$ is either a semigroup or a group, acting on $Y$. The result of wreath product of such form is also a linear automaton. {\it Warning}: indecomposable (with respect to triangular product) automata admit further decomposition using wreath products.

Now we are in the position when 
the acting semigroup $\Gamma$ is completely $0$-simple, and one can represent $\Gamma$ as a divisor of the wreath product of a group and a "flip-flop". Combining this fact with the just defined wreath product of a linear automaton and a semigroup (pure) automaton we arrive to the decomposition of the linear atom $(A,\Gamma)$ into the wreath product of a linear group automaton and a "flip-flop".

Step 4. According to step 1 the linear automaton  $\Lambda$ is a divisor of a triangular product of irreducible representations of the semigroup $\Gamma$. Using steps 2 and 3 and some properties of the introduced operations one can get that  $\Lambda$ is a divisor of a triangular product of wreath products, where each factor is either an irreducible representation $(A_i,G_i)$ of a group $G_i$, or a "flip-flop" automaton.

Step 5. Since the group $G_i$ is not necessarily simple there is a room for  further reduction.
In order to finish the process one should apply the constructions of triangular product and wreath product of a linear and pure automata to the obtained irreducible group representation $(A_i,G_i)$. Then, the indecomposable factors will be linear group automata with acting simple group and "flip-flops" (trivial factors). It remains to calculate either the number of non-trivial factors, or the number of operations involved.


We do not prove new theorems in the paper, using some material from \cite{PGG} instead. The main emphasis is put on organization of a decomposition process which allows us to get the notion of the complexity of a linear automaton as an output.




\section{Pure semigroup automata}\label{sec:semigr_atm}

Recall, (see Introduction) that
a pure semigroup automaton is a triple
 $(A, \Gamma, B)$, where $\Gamma$ is a semigroup, $A$, $B$ are the sets, and the operations $\circ$ and $\ast$
are correlated with the operation in the semigroup:
$$a \circ \gamma_1 \gamma_2 = (a \circ \gamma_1) \circ \gamma_2,$$
$$a \ast \gamma_1 \gamma_2 = (a \circ \gamma_1) \ast \gamma_2.$$

Given sets $A$ and $B$, denote by $S_A$ the semigroup of transformations of the set $A$ and by $Fun(A,B)$ the set of mappings from $A$ to $B$. Consider the Cartesian product $S_{A,B} = S_A \times Fun(A,B)$. Here $S_{A,B}$ is a semigroup with respect to the multiplication: $(\sigma_1, \varphi_1)(\sigma_2, \varphi_2)= (\sigma_1 \sigma_2, \sigma_1 \varphi_2)$, $\sigma \in S_A$, $\varphi \in Fun(A,B)$. Define an automaton $(A,S_{A,B},B)$ by the rule: $a \circ (\sigma, \varphi) = a \sigma$, $a \ast (\sigma, \varphi) = a \varphi$. 
Any semigroup automaton $(A, \Gamma, B)$ is determined by a homomorphism $\Gamma \to S_{A,B}$. In this sense the automaton $(A,S_{A,B},B)$ is universal.


 Consider a pure automaton $(A,X,B)$, where  $X$ is a set. We have a mapping $X \to S_{A,B}$. Let $F(X)$ be the free semigroup over the set $X$. The initial mapping is extended up to a homomorphism $\mu: F(X) \to S_{A,B}$, which determines a semigroup automaton $(A, F(X), B)$. We can pass from $(A, F(X), B)$ to a faithful semigroup automaton $(A,\Gamma,B)$ where $\Gamma$ is a result of factorization of the semigroup $F(X)$ by the kernel of $\mu $. 
 So, a pure automaton $(A,X,B)$ gives rise to a faithful semigroup automaton $(A,\Gamma,B)$. This transition allows us to construct a decomposition theory  for pure automata (Krohn-Rhodes theory).

\section{Semigroup linear automata}\label{sec:semigr_lin_atm}
The same reasoning can be repeated for linear automata. Given linear spaces $A$ and $B$ over a field $P$, take a semigroup (associative algebra) of endomorphisms $End(A)$ and a space of homomorphisms $Hom(A,B)$. Take a Cartesian product $End(A,B)= End(A) \times Hom(A,B)$. As in Section \ref{sec:semigr_atm}, proceed from the multiplication $(\sigma_1, \varphi_1)(\sigma_2, \varphi_2)= (\sigma_1 \sigma_2, \sigma_1 \varphi_2)$ and get a semigroup $End(A,B)$. 
 We can repeat almost literally the material from the previous section  and get a semigroup linear automaton $(A,End(A,B),B)$, but we will make one more step and generalize the notion of a linear automaton.

We shall use the language of matrices. Represent an element $(\sigma, \varphi)$ of $End(A,B)$ by a matrix

\[
    \begin{pmatrix}
        \sigma &\varphi \\
0 & 0 \\
    \end{pmatrix}.
    \]

\noindent
Then,

\[
    \begin{pmatrix}
     \sigma_1&\varphi_1 \\
0 & 0 \\
    \end{pmatrix}
        \begin{pmatrix}
     \sigma_2&\varphi_2 \\
0 & 0 \\
    \end{pmatrix}
    \quad = \quad
    \begin{pmatrix}
    \sigma_1\sigma_2&\sigma_1\varphi_2\\
    0&0
    \end{pmatrix}
              \]


\noindent
This motivates the above definition of multiplication in the semigroup $End(A,B)$, as well as gives an opportunity to consider matrices of the form
\[
    \begin{pmatrix}
        \sigma &\varphi \\
0 & \sigma ' \\
    \end{pmatrix},
    \]
where $\sigma'\in End(B)$,  with the usual matrix multiplication


\[
    \begin{pmatrix}
     \sigma_1&\varphi_1 \\
0 & \sigma_1 ' \\
    \end{pmatrix}
        \begin{pmatrix}
     \sigma_2&\varphi_2 \\
0 & \sigma_2 ' \\
    \end{pmatrix}
    \quad = \quad
    \begin{pmatrix}
    \sigma_1\sigma_2&\sigma_1 \varphi_2+\varphi_1\sigma_2 '\\
    0&\sigma_1 '\sigma_2 '
    \end{pmatrix}
              \]

Denote this semigroup of triangular matrices by the same $End(A,B)$. The semigroup $End(A,B)$ acts also in $B$.  We represent the semigroup $End(A,B)$ in the matrix form:

\[
    End(A,B)= \begin{pmatrix}
    End (A)&Hom(A,B)\\
0 & End(B) \\
    \end{pmatrix}
                 \]

 \noindent
 It leads to the automaton $(A, End(A,B), B)$. Here

\[
    a \circ \begin{pmatrix}
     \sigma&\varphi \\
0 & \sigma ' \\
    \end{pmatrix}
    = a \sigma , \quad \quad a\ast
        \begin{pmatrix}
     \sigma&\varphi \\
0 & \sigma ' \\
    \end{pmatrix}
     =  a \varphi , \quad \quad b \circ
    \begin{pmatrix}
    \sigma &\varphi\\
    0&\sigma '
    \end{pmatrix}
     =  b \sigma ' .
              \]

\noindent
This example hints the definition of a semigroup linear automaton.

\begin{defn}\label{de:lin}
A linear automaton is a triple $(A,\Gamma, B)$, where $A$ and $B$ are  modules over $K$, and   $\Gamma$ is a semigroup acting in $A$, from $A$ to $B$ and in $B$. The corresponding operations of actions $\circ$,
$\cdot$ and $\ast$ should satisfy the conditions:

\begin{itemize}
\item[1. ]{$a\circ \gamma_1 \gamma_2 = (a\circ \gamma_1)\circ \gamma_2$,}

\item[2. ]{$a\ast \gamma_1 \gamma_2 = (a\circ \gamma_1)\ast \gamma_2 + (a\ast \gamma_1)\circ \gamma_2 $,}

\item[3. ]{$b\cdot \gamma_1 \gamma_2 =(b\cdot \gamma_1)\cdot \gamma_2$.}
\end{itemize}
\end{defn}
\noindent
Here $a \in A$, $b \in B$, $\gamma_1, \gamma_2 \in \Gamma$. The automaton $(A, End(A,B),B)$ is a universal semigroup linear automaton and each automaton $(A, \Gamma, B)$ is determined by a homomorphism $\mu:\Gamma \to End(A,B)$. Faithfulness of an automaton means faithfulness of a homomorphism $\mu$.  To a linear automaton $(A, X, B)$ corresponds an automaton $(A, F(X), B)$ and  a faithful automaton $(A, \Gamma, B)$.


\section{Cascade connection of pure automata}\label{sec:casc_con_pure_atm}
We start the topic of constructions in automata theory. Cascade connections discussed here generalize parallel and serial connections of pure automata.

Let (pure) automata $(A_1, X_1, B_1)$ and $(A_2, X_2, B_2)$ be given. Their cascade connection has the form $(A_1 \times A_2, X, B_1\times B_2)$, where $X$ is a set equipped by the mappings:
$$\alpha : X \times A_2 \to X_1, \ \ \beta : X \to X_2.$$ We set:
$$(a_1, a_2) \circ x = (a_1 \circ \alpha(x,a_2), a_2\circ \beta(x)),$$
$$(a_1, a_2) \ast x = (a_1 \ast \alpha(x,a_2), a_2\ast \beta(x)).$$

The obtained automaton $(A_1 \times A_2, X, B_1\times B_2)$ is the {\it cascade connection} of $(A_1, X_1, B_1)$ and $(A_2, X_2, B_2)$  determined by a triple $(X, \alpha, \beta)$.

The category of such triples with the given automata $(A_1, X_1, B_1)$ and $(A_2, X_2, B_2)$ is defined naturally. Its morphisms $\mu:(X, \alpha, \beta) \to (X', \alpha ', \beta ')$ are presented by commutative diagrams

$$
\CD
X\times A_2 @>\alpha >> X_1\\
@. @/SE/\mu// @AA\alpha' A\\
@. X'\times A_2\\
\endCD
\qquad
\CD
X @>\beta >>\ X_2\\
@. @/SE/\mu// @AA\beta' A\\
@.\ X',\\
\endCD
$$
where $\mu(x,a_2) = (\mu(x), a_2)$. The category of triples determines the category of all cascade connections of the given automata.




The same construction works  for semigroup automata. Given  automata
$(A_1,\Gamma_1,B_1)$ and $(A_2,\Gamma_2,B_2)$, consider the triples
$(\Gamma, \alpha, \beta)$. Here $\beta: \Gamma \to \Gamma_2$ is a
homomorphism of semigroups and $\alpha: \Gamma \times A_2 \to
\Gamma_1$ satisfies the condition $\alpha(\gamma_1 \gamma_2,a) =
\alpha(\gamma_1,a) \alpha (\gamma_2, a\circ \beta(\gamma_1))$,
$\gamma_1, \gamma_2 \in \Gamma$, $a \in A_2$. In this framework the
corresponding cascade connection $(A_1 \times A_2, \Gamma, B_1
\times B_2)$ is a semigroup automaton, and, besides,   there is a
 category of cascade connections for the given pair
of semigroup automata.



Such a category has a universal terminal object, called the {\it wreath product} of the given automata and denoted by
$$(A_1, \Gamma_1, B_1)\ wr \ (A_2, \Gamma_2, B_2).$$
By the definition of a terminal object, every cascade connection of the given automata is embedded into their wreath product.

This universal object can be constructed in the very transparent way. Take the semigroup $\Gamma_1^{A_2}$ whose elements are mappings $\bar \gamma_1 : A_2 \to \Gamma_1$, $\bar \gamma_1 (a_2) = \gamma_1 \in  \Gamma_1$. The semigroup $\Gamma_2$ acts in $\Gamma_1 ^{A_2}$ by the rule $(\bar \gamma_1 \circ \gamma_2)(a_2) = \bar \gamma_1 (a_2 \circ \gamma_2)$. Consider the semidirect product  $\Gamma = \Gamma_1^{A_2} \leftthreetimes  \Gamma_2$ with the multiplication  $(\bar \gamma_1 , \gamma_2)(\bar \gamma_1 ' , \gamma_2 ')= (\bar \gamma_1 \cdot (\bar \gamma_1 ' \circ \gamma_2), \gamma_2 \gamma_2 ')$. This is the wreath product of semigroups $\Gamma_1 \ wr^{A_2} \ \Gamma_2$.
Setting $\alpha((\bar \gamma_1, \gamma_2), a_2) = \bar \gamma_1 (a_2)$ we define $\alpha : \Gamma \times A_2 \to \Gamma_1$. Setting $\beta(\bar \gamma_1, \gamma_2)= \gamma_2$ we get $\beta:\Gamma \to \Gamma_2$. The necessary conditions for $(\Gamma, \alpha, \beta)$ to be the terminal object are checked and we come up with the automaton
$$(A_1 \times A_2, \Gamma_1 wr^{A_2} \Gamma_2, B_1 \times B_2) = (A_1, \Gamma_1, B_1) \ wr \ (A_2, \Gamma_2, B_2).$$

The wreath product construction works in the Krohn-Rhodes theory which leads to the decomposition of pure automata and to the definition of complexity of this decomposition.




\section{Linear automata and representations of semigroups}\label{sec:LA_rep_semigr}
We denote a representation of the semigroup $\Gamma$ in a $K$-module $A$ by $(A, \Gamma)$. The representations are treated as semi-automata with the single operation $\circ: A\times \Gamma\to A$ subject to the condition $a \circ \gamma_1 \gamma_2 = (a \circ \gamma_1)\circ \gamma_2$. This means that  $a \to a \circ \gamma$ is an endomorphism of  $A$.


Let $(A, \Gamma_1)$ and $(B, \Gamma_2)$ be faithful representations. Denote by $(A, \Gamma_1)\nabla (B, \Gamma_2) = (A\oplus B, \Gamma)$ the triangular product of the representations $(A, \Gamma_1)$ and $(B, \Gamma_2)$. Here the semigroup $\Gamma$ is the semigroup of triangular matrices
\[ \gamma =
    \begin{pmatrix}
        \gamma_1&\varphi \\
        0       & \gamma_2 \\
    \end{pmatrix}
    \]
\noindent
where $\gamma_1 \in \Gamma_1$, $\gamma_2 \in \Gamma_2$, $\varphi \in Hom(A,B)$, and the automaton operation $\circ$ in $(A, \Gamma_1)\nabla (B, \Gamma_2)$ is defined by  $(a+b)\circ \gamma = a \circ \gamma_1 + (a \varphi + b \gamma_2)$. The same semigroup $\Gamma$ determines an automaton $(A, \Gamma, B)$. Here $a \circ \gamma = a \gamma_1$, $b \cdot \gamma = b \gamma_2$, $a \ast \gamma = a \varphi$. However, not every automaton can be obtained from a triangular product of representations.



Let us define the notion of {\it cascade connection of representations}. Given faithful representations $(A, \Gamma_1)$ and $(B, \Gamma_2)$, their cascade connection has the form $(A\oplus B, \Gamma)$. Besides, the following two conditions should be fulfilled:

\begin{itemize}
\item[1. ]{The subspace $B$ in $A\oplus B$ is invariant in respect to $\Gamma$, and the faithful representation corresponding to $(B,\Gamma)$ should be isomorphic to $(B, \Gamma_2)$}.


\item[2. ]{In the representation $(A\oplus B / B, \Gamma)$ the corresponding faithful representation should be isomorphic to $(A, \Gamma_1)$.}

\end{itemize}
Let us present several examples of cascade connections.

\begin{itemize}
\item[1. ]{Parallel connection $(A, \Gamma_1) \times (B, \Gamma_2) = (A\oplus B , \Gamma_1 \times \Gamma_2)$.}

\item[2. ]{Triangular product $(A, \Gamma_1)\nabla(B, \Gamma_2)$.}

\item[3. ]{If $(A, \Gamma, B)$ is an automaton, then the corresponding representation $(A\oplus B, \Gamma)$ defined by $(a+b)\circ \gamma = a \circ \gamma_1 + (a \ast \gamma +b \cdot \gamma_2)$  is a cascade connection of faithful representations $(A, \Gamma_1)$ and $(B, \Gamma_2)$.}
\end{itemize}

Cascade connections of the representations $(A, \Gamma_1)$ and $(B, \Gamma_2)$ can be built as follows.  
Fix natural projections $\delta_1: A\oplus B \to B$ and $\delta_2: A\oplus B \to A$. Consider   triples of the form  $(\Gamma, \alpha, \beta)$ with homomorphisms of semigroups $\alpha: \Gamma \to \Gamma_1$ and $\beta: \Gamma \to \Gamma_2$. Every triple $(\Gamma, \alpha, \beta)$, satisfying  
$$\delta_2 \gamma = \delta_2 \gamma^ \beta, \ \ \gamma \delta_1 = \delta_1 \gamma^ \alpha,$$
for every $\gamma \in \Gamma$, determines a  cascade connection $(A\oplus B , \Gamma)$ of representations $(A, \Gamma_1)$ and $(B, \Gamma_2)$.

 The category of cascade connections of faithful representations $(A, \Gamma_1)$ and $(B, \Gamma_2)$ is defined by morphisms (homomorphisms) of representations. One can prove that this category has a terminal object which is the triangular product of representations
$$(A, \Gamma_1)\nabla (B, \Gamma_2).$$

\section{Cascade connections and the triangular product of automata}\label{sec:CC_and_TP}
 Let us define a cascade connection of  linear automata $\Lambda_1 = (A_1, \Gamma_1, B_1)$ and $\Lambda_2 = (A_2, \Gamma_2, B_2)$. Along with $\Lambda_1$ and $\Lambda_2$  we have  representations $(A_1, \Gamma_1)$, $(B_1, \Gamma_1)$, $(A_2, \Gamma_2)$, $(B_2, \Gamma_2)$. Proceed to faithful representations $(A_1, \Gamma_1^1)$, $(B_1, \Gamma_1^2)$, $(A_2, \Gamma_2^1)$, $(B_2, \Gamma_2^2)$. Take a cascade connection of $(A_1, \Gamma_1^1)$ and $(A_2, \Gamma_2^1)$ and denote it by $(A_1 \oplus A_2, \Sigma_1)$. Denote a cascade connection of $(B_1, \Gamma_1^2)$ and $(B_2, \Gamma_2^2)$ by $(B_1 \oplus B_2, \Sigma_2)$. Assume these cascade connections to be faithful and take a cascade connection $((A_1 \oplus A_2) \oplus (B_1 \oplus B_2), \Gamma)$ of these cascade connections. The corresponding automaton  $(A_1 \oplus A_2, \Gamma, B_1 \oplus B_2)$ is said to be a {\it cascade connection of automata} $\Lambda_1$ and $\Lambda_2$.


The category of cascade connections of representations gives rise to the category of cascade connections of automata.

\begin{defn}
The category of cascade connections of automata $\Lambda_1$ and $\Lambda_2$ possesses a universal  terminal object. This object is called the triangular product $\Lambda_1 \nabla \Lambda_2$ of the given automata $\Lambda_1$ and $\Lambda_2$. We use also the notation
$$(A_1 \oplus A_2, \Gamma, B_1 \oplus B_2)=(A_1, \Gamma_1, B_1) \nabla (A_2, \Gamma_2, B_2).$$
\end{defn}

For the explicit construction of  $\Lambda_1 \nabla \Lambda_2$  we shall use the language of matrices.
We shall define the semigroup $\Gamma$ and the corresponding actions.
Take first the triangular product of faithful representations $(A_1, \Gamma_1^1)$ and $(A_2, \Gamma_2^1)$. The corresponding matrices have the form
\[ \gamma =
    \begin{pmatrix}
        \gamma_1&\varphi \\
        0       & \gamma_2 \\
    \end{pmatrix}
    \]
\noindent
where $\gamma_1 \in \Gamma_1^1$,  $\gamma_2 \in \Gamma_2^1$, $\varphi \in Hom(A_1, A_2)$. Similarly, we take matrices
\[ \gamma '=
    \begin{pmatrix}
        \gamma_1 '&\varphi ' \\
        0       & \gamma_2 '\\
    \end{pmatrix}
    \]
\noindent
where $\gamma_1 ' \in \Gamma_1^2$,  $\gamma_2 '\in \Gamma_2^2$, $\varphi' \in Hom(B_1, B_2)$ for $B_1$ and $B_2$.

Taking again the triangular product of these two representations we get the semigroup $\Gamma$ of matrices of the form
\[
    \begin{pmatrix}
     \gamma_{1}&\varphi&\varphi_{13}&\varphi_{14}\\
0 &\gamma_2 &\varphi_{23} &\varphi_{24}\\
0&0 &\gamma_1 '&\varphi '\\
0 & 0 & 0&\gamma_2 '
    \end{pmatrix}
         \]

\noindent
Here the matrix
\[
    \begin{pmatrix}
        \varphi_{13}&\varphi _{14} \\
        \varphi_{23}& \varphi_{24}\\
    \end{pmatrix}
    \]
determines $Hom(A_1 \oplus A_2, B_1 \oplus B_2)$, and  $\varphi_{13}:A_1 \to B_1$, $\varphi_{14}:A_1 \to B_2$, $\varphi_{23}:A_2 \to B_1$ and $\varphi_{24}:A_2 \to B_2$. The semigroup $\Gamma$ and the related operations $\circ:(A_1 \oplus A_2)\times\Gamma\to (A_1 \oplus A_2)$, $\ast:(A_1 \oplus A_2)\times\Gamma\to (B_1 \oplus B_2)$, and $\cdot:(B_1 \oplus B_2)\times\Gamma\to (B_1 \oplus B_2)$ determine the {\it  triangular product} $\Lambda_1\nabla\Lambda_2=(A_1 \oplus A_2, \Gamma, B_1 \oplus B_2)$ of the given automata.

 The triangular product of automata  is an associative operation,  used for the decomposition of linear automata. Another necessary operation is  defined in the next section.
The notion of the triangular product of linear representations is considered in the books \cite{PV}, \cite{V1}.

\section{Wreath product of a linear automaton and a pure one}\label{sec:WP_LA_PA}
Let $\Lambda = (A, \Gamma, B)$ be a linear automaton and $\Psi =(X, \Sigma)$ a pure representation of the semigroup $\Sigma$ which is a pure semi-automaton. Our aim is to define their wreath product $\Lambda \ wr \ \Psi$ which is a linear automaton as well. First of all, linearize $\Psi$. This means that we take the linear semi-automaton $(KX,\Sigma)$, where $KX$ is the linear envelope of $X$ over the ground field $K$.
  Take further the wreath product of semigroups $\Gamma wr^X \Sigma=\Gamma^X \times \Sigma$ whose elements have the form $(\bar \gamma, \sigma)$, $\bar \gamma : X \to \Gamma$,  $\sigma \in \Sigma$. Define the linear automaton
$$\Lambda \ wr \ \Psi = (A \otimes KX, \Gamma \ wr^X \  \Sigma, B \otimes KX).$$
Here $\otimes$ denotes  tensor product of spaces over $K$. 
Operations $\circ$, $\cdot$ and $\ast$ are defined on generators. Let $a \in A$ and $b \in B$. We set:
$$(a \otimes x)\circ (\bar \gamma, \sigma) = (a \circ \bar \gamma(x)) \otimes x \sigma,$$
$$(b \otimes x)\cdot (\bar \gamma, \sigma) = ( b \cdot \bar \gamma(x)) \otimes x \sigma,$$
$$(a \otimes x)\ast (\bar \gamma, \sigma) = (a \ast \bar \gamma(x)) \otimes x \sigma.$$
This determines the wreath product $\Lambda \ wr \ \Psi $  of the linear automaton $\Lambda = (A, \Gamma, B)$  and the representation $\Psi =(X, \Sigma)$.

Define also a cascade connection of the linear automaton $(A,\Gamma_1,B)$ with a pure one $(X,\Gamma_2)$. Consider first the triples $(\Gamma, \alpha,\beta)$ where $\alpha: X\times \Gamma\to \Gamma_1$ is a map subject to the condition
 $$
 \alpha(x,\gamma_1\gamma_2)=\alpha(x,\gamma_1)\alpha(x\circ \beta(\gamma_1),\gamma_2),
 $$
 and $\beta$ is a homomorphism of semigroups $\Gamma\to \Gamma_2$.

Define the linear automaton $(A\otimes KX,\Gamma, B\otimes KX)$ by the following rules: 
$$(a \otimes x)\circ \gamma   = (((a \circ \alpha(x, \gamma))\otimes (x\circ\beta(\gamma))), $$
$$(a \otimes x)\ast \gamma   = (((a \ast \alpha(x, \gamma))\otimes (x\circ\beta(\gamma))), $$
$$(b \otimes x)\cdot \gamma   = (((b \cdot \alpha(x, \gamma))\otimes (x\cdot\beta(\gamma))). $$
An  automaton of the type $(A\otimes KX,\Gamma, B\otimes KX)$ is called a {\it cascade connection of the linear automaton} $(A,\Gamma_1,B)$ {\it with the pure one} $(X,\Gamma_2)$. Consider the category of all cascade connections of this form. This category has the terminal object which is the wreath product of the linear automaton $(A,\Gamma_1,B)$ and the pure automaton $(X,\Gamma_2)$.

Proceed further from a semi-automaton $\Lambda = (A, \Gamma)$. Then \cite{PGG}, 

\begin{itemize}
\item[1. ]{$ (\Lambda \ wr \ \Psi_1)\ wr \ \Psi_2 = \Lambda \ wr \ (\Psi_1 \ wr \ \Psi_2),$}

\item[2. ]{$ \Lambda_1 \nabla (\Lambda_2 \ wr \ \Psi) \subset (\Lambda_1 \nabla \Lambda_2) \ wr \ \Psi,$}

\item[3. ]{$ (\Lambda_1 \nabla \Lambda_2) \ wr \ \Psi \subset (\Lambda_1 \ wr \ \Psi) \nabla (\Lambda_2 \ wr \ \Psi).$}
\end{itemize}


\section{Decomposition}\label{sec:Decomp}

Let $\mathfrak A_1$ and $\mathfrak A_2$ be two algebras in an arbitrary variety of algebras (multi-sorted in general). The following definition is well-known.
 \begin{defn} An algebra
 $\mathfrak A_1$ is called a divisor of $\mathfrak A_2$ if $\mathfrak A_1$ is 
 a homomorphic image of a subalgebra in $\mathfrak A_2$. We denote  $\mathfrak A_1 | \mathfrak A_2$.
 \end{defn}

In our situation if $\Lambda_1 | \Lambda_2$, $\Psi_1 | \Psi_2$, then $(\Lambda_1 \ wr \ \Psi_1) | (\Lambda_2 \ wr \ \Psi_2)$. This relation works in the decomposition theory of   linear automata. It is evident that $\Lambda_1 | \Lambda_2 \ \& \  \Lambda_2 | \Lambda_3$ implies $\Lambda_1 | \Lambda_3$.


\begin{defn}
A linear automaton $\Lambda_1 = \nabla_i (\Lambda_i \ wr \ \Phi_i)$ determines  a correct decomposition of the automaton $\Lambda$
 if $\Lambda | \Lambda_1$, and $\Lambda_i | \Lambda$ for all $i$. We denote a correct decomposition by $\Lambda \cong \Lambda_1$.
\end{defn}

\begin{lemma}\label{le:tr}
Correct decompositions  are transitive in the following sense. Let $\Lambda \cong \Lambda_1= \nabla_i (\Lambda_i \ wr \ \Phi_i)$,  and  $\Lambda_i \cong \Lambda_i '=\nabla_j(\Lambda_{ij} \ wr \ \Phi_{ij})  $   for each $i$. Define
$\Lambda_2 = \nabla_{ij} ((\Lambda_{ij} \ wr \ \Phi_{ij}) \ wr \ \Phi_i)$. Then $\Lambda \cong \Lambda_2$.
\end{lemma}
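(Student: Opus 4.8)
The plan is to verify the two defining conditions of a correct decomposition for $\Lambda_2$, namely that $\Lambda | \Lambda_2$ and that each atom $\Lambda_{ij}$ satisfies $\Lambda_{ij} | \Lambda$. Throughout I would rely on three facts already available: transitivity of the divisor relation $|$; the compatibility of the wreath product with $|$, that is, $\Lambda_1 | \Lambda_2$ and $\Psi_1 | \Psi_2$ imply $(\Lambda_1 \ wr \ \Psi_1) | (\Lambda_2 \ wr \ \Psi_2)$; and the inclusions at the end of Section \ref{sec:WP_LA_PA}, in particular $(\Lambda_1 \nabla \Lambda_2) \ wr \ \Psi \subset (\Lambda_1 \ wr \ \Psi) \nabla (\Lambda_2 \ wr \ \Psi)$, which I would iterate using associativity of $\nabla$ to obtain $(\nabla_j \Xi_j) \ wr \ \Psi \subset \nabla_j (\Xi_j \ wr \ \Psi)$.

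The atom condition is the easy half. By definition of the inner correct decompositions $\Lambda_i \cong \Lambda_i'$ we have $\Lambda_{ij} | \Lambda_i$ for all $i,j$, and by definition of the outer correct decomposition $\Lambda \cong \Lambda_1$ we have $\Lambda_i | \Lambda$ for all $i$. Transitivity of $|$ then gives $\Lambda_{ij} | \Lambda$, which is exactly the atom condition for $\Lambda_2$.

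For the covering condition $\Lambda | \Lambda_2$ I would establish $\Lambda_1 | \Lambda_2$ and compose with the hypothesis $\Lambda | \Lambda_1$. Fix $i$. From $\Lambda_i \cong \Lambda_i'$ we have $\Lambda_i | \nabla_j (\Lambda_{ij} \ wr \ \Phi_{ij})$; wreathing with $\Phi_i$ (and using $\Phi_i | \Phi_i$) yields
$$\Lambda_i \ wr \ \Phi_i \ \big| \ \Big(\nabla_j (\Lambda_{ij} \ wr \ \Phi_{ij})\Big) \ wr \ \Phi_i.$$
The iterated inclusion above embeds the right-hand side into $\nabla_j ((\Lambda_{ij} \ wr \ \Phi_{ij}) \ wr \ \Phi_i)$, and since a subautomaton is in particular a divisor, transitivity of $|$ gives
$$\Lambda_i \ wr \ \Phi_i \ \big| \ \nabla_j \big((\Lambda_{ij} \ wr \ \Phi_{ij}) \ wr \ \Phi_i\big)$$
for every $i$. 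Taking the triangular product over $i$ and invoking associativity of $\nabla$ to identify $\nabla_i \nabla_j$ with $\nabla_{ij}$ produces $\Lambda_1 = \nabla_i (\Lambda_i \ wr \ \Phi_i) \, | \, \Lambda_2$. Combined with $\Lambda | \Lambda_1$ and transitivity, this is $\Lambda | \Lambda_2$.

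The step I expect to be the real obstacle is the passage from $\Xi_i | \Theta_i$ for all $i$ to $\nabla_i \Xi_i | \nabla_i \Theta_i$: the excerpt records the compatibility of $|$ with the wreath product but not with the triangular product. Establishing this monotonicity of $\nabla$ — most naturally from the functoriality of the triangular product as a terminal object in the category of cascade connections, or by citing the corresponding result in \cite{PGG} — is what makes the final ``take the triangular product over $i$'' legitimate. The remaining points (iterating inclusion (3), and the fact that $\subset$ implies $|$) are routine once this monotonicity is in hand.
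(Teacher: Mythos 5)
Your proposal is correct and follows essentially the same route as the paper's own proof: both halves (the atom condition via transitivity of $|$, and $\Lambda_1 \,|\, \Lambda_2$ via wreathing with $\Phi_i$, the inclusion $(\nabla_j \Xi_j)\ wr\ \Psi \subset \nabla_j(\Xi_j\ wr\ \Psi)$, and then taking triangular products over $i$) match the paper step for step. The monotonicity of $\nabla$ with respect to division, which you rightly flag as the one unproved ingredient, is handled by the paper in exactly the way you anticipate: it is simply asserted in the proof (``By the definition, we have the following properties\dots These properties are valid for any number of factors''), with the substantive justification residing in \cite{PGG}.
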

\begin{proof}
Assume  that $\Lambda \cong \Lambda_1= \nabla_i (\Lambda_i \ wr \ \Phi_i)$,  and that $\Lambda_i \cong \Lambda_i '=\nabla_j(\Lambda_{ij} \ wr \ \Phi_{ij})  $   for each $i$. Substitute $\Lambda_i$ by $\Lambda_i '$ in $\Lambda_1 = \nabla_i (\Lambda_i \ wr \ \Phi_i)$. We get
$$\Lambda_2 = \nabla_{ij} ((\Lambda_{ij} \ wr \ \Phi_{ij}) \ wr \ \Phi_i) = \nabla_{ij} (\Lambda_{ij} \ wr \ (\Phi_{ij} \ wr \ \Phi_i)).  $$
By the definition, we have the following properties:
$$\Lambda_1 | \Lambda_1 ' \ \& \  \Lambda_2 | \Lambda_2 ' \ \ {\rm implies}\ \ \Lambda_1 \nabla \Lambda_2| \Lambda_1 ' \nabla \Lambda_2 ',$$
$$\Lambda_1 | \Lambda ' \ \& \  \Phi | \Phi ' \ \ {\rm implies} \ \ \Lambda \ wr \ \Phi | \Lambda ' \ wr \ \Phi '.$$
These properties are valid for any number of factors.

Since $\Lambda_i \cong \Lambda_i '$, we have $\Lambda_i \ wr \ \Phi_i | \Lambda_i ' \ wr \ \Phi_i$. Since $\Lambda_i '=\nabla_j(\Lambda_{ij} \ wr \ \Phi_{ij})  $,  we get $\Lambda_i \ wr \ \Phi_i | \nabla_j (\Lambda_{ij} \ wr \ \Phi_{ij}) \ wr \ \Phi_i$ which gives also $\Lambda_i \ wr \ \Phi_i | \nabla_j (\Lambda_{ij} \ wr \ (\Phi_{ij} \ wr \ \Phi_i))$. Take triangular products by $i$ and get $\Lambda_1 | \Lambda_2$. Since $\Lambda | \Lambda_1$, then $\Lambda | \Lambda_2$. We have also $\Lambda_{ij} | \Lambda_i$ and $\Lambda_i | \Lambda$. This gives $\Lambda_{ij} | \Lambda$, which leads to the correct decomposition $\Lambda \cong \Lambda_2$.
\end{proof}



We shall use some results from \cite{PGG}. Consider, first, the linear decomposition of automata.

\begin{defn}
An automaton $\Lambda$ is linearly decomposable  into automata $\Lambda_i$,  if $\Lambda_i | \Lambda$ and $ \Lambda | \nabla_i \Lambda_i$, where $i=1, \ldots ,k$. 
\end{defn}
The idea of linear complexity relies on the next two theorems which deal with linear decomposition of automata.  Given a faithful automaton $\Lambda = (A, \Gamma, B)$, take subspaces $A_0$ in $A$ and $B_0$ in $B$ invariant under $\Gamma$. 
 Suppose that $a \ast \gamma \in B_0$ holds for every $a \in A_0$ and $\gamma \in \Gamma$.
 This leads to automata $(A_0, \Gamma, B_0)$ and $(A / A_0, \Gamma, B / B_0)$. Let us pass to faithful automata $\Lambda_1 = (A_0, \Gamma_1, B_0)$ and $\Lambda_2 = (A / A_0, \Gamma_2, B / B_0)$. Both automata $\Lambda_1$ and $\Lambda_2$ are divisors of the automaton $\Lambda$.
The following theorem holds:

\begin{theorem}\label{th:divisor_of_TP}
An automaton $\Lambda$ is a divisor of a triangular product $\Lambda_1 \nabla \Lambda_2$.
\end{theorem}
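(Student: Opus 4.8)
The plan is to read the hypotheses as precisely the data exhibiting $\Lambda$ as a cascade connection of $\Lambda_1$ and $\Lambda_2$, and then to invoke the result of Sections~\ref{sec:LA_rep_semigr}--\ref{sec:CC_and_TP} that the triangular product $\Lambda_1\nabla\Lambda_2$ is the terminal object in the category of such cascade connections. Just as in the pure case every cascade connection embeds into the wreath product, here the unique morphism into the terminal object realizes any cascade connection of $\Lambda_1$ and $\Lambda_2$ as a subautomaton of $\Lambda_1\nabla\Lambda_2$; a subautomaton is in particular a divisor, so $\Lambda\mid\Lambda_1\nabla\Lambda_2$ will follow the moment the cascade structure is produced. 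Hence the entire proof reduces to constructing that structure from the invariant subspaces $A_0,B_0$.

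First I would pass to the representation $(A\oplus B,\Gamma)$ associated with $\Lambda$ as in item~3 of Section~\ref{sec:LA_rep_semigr}, with action $(a+b)\circ\gamma=a\circ\gamma+(a\ast\gamma+b\cdot\gamma)$. The decisive point is that $W:=A_0\oplus B_0$ is $\Gamma$-invariant: for $a\in A_0$, $b\in B_0$ we have $a\circ\gamma\in A_0$ by invariance of $A_0$, $b\cdot\gamma\in B_0$ by invariance of $B_0$, and $a\ast\gamma\in B_0$ by hypothesis, whence $(a+b)\circ\gamma\in A_0\oplus B_0$. I want to stress that the assumption ``$a\ast\gamma\in B_0$ for $a\in A_0$'' is used exactly here --- it is what keeps the $\ast$-component from leaking out of $B_0$ --- and that the same assumption is what makes the induced map $\ast\colon A/A_0\to B/B_0$ well defined. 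Restricting the operations to $W$ yields the sub-automaton $(A_0,\Gamma,B_0)$, whose faithfulization is $\Lambda_1$, and passing to $(A\oplus B)/W$ yields the quotient automaton $(A/A_0,\Gamma,B/B_0)$, whose faithfulization is $\Lambda_2$; the subspaces $A_0,B_0$ occupy their natural (invariant) places in the flag, so the two factors line up with $\Lambda_1\nabla\Lambda_2$.

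Next I would promote this to an honest cascade connection of automata in the sense of Section~\ref{sec:CC_and_TP}. After fixing vector-space splittings $A\cong A_0\oplus(A/A_0)$ and $B\cong B_0\oplus(B/B_0)$, each $\gamma\in\Gamma$ acts by a block matrix of exactly the triangular shape defining the triangular-product semigroup: invariance of $A_0$ and of $B_0$ makes the input- and output-diagonal blocks triangular, the operation $\ast$ furnishes the off-diagonal $\mathrm{Hom}(A,B)$-block, and the standing hypothesis forces the sub-input-to-quotient-output corner into the slot prescribed by the definition. The restricted and induced actions, together with the two faithfulizing homomorphisms $\Gamma\to\Gamma_1$ and $\Gamma\to\Gamma_2$, then assemble into a cascade-connection triple $(\Gamma,\alpha,\beta)$, and I would check its defining identities (that $\beta$ is a homomorphism and that $\alpha$ obeys the cocycle condition of Section~\ref{sec:CC_and_TP}) directly from automaton axioms~1--3 of Definition~\ref{de:lin}.

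The step I expect to be the main obstacle is the bookkeeping of the \emph{double} cascade and the faithfulizations. The triangular product of Section~\ref{sec:CC_and_TP} is built as a cascade connection of the input-side cascade connection on $A_0\oplus(A/A_0)$ with the output-side one on $B_0\oplus(B/B_0)$, so I must verify that the single invariant subspace $W=A_0\oplus B_0$ is simultaneously compatible with both flags, and that restriction to $W$ and passage to $(A\oplus B)/W$ commute, up to isomorphism of representations, with the faithfulizations producing $\Gamma_1,\Gamma_2$. Once the triple is shown to satisfy the cascade-connection axioms, terminality of $\Lambda_1\nabla\Lambda_2$ supplies the morphism, exhibiting $\Lambda$ as a subautomaton and hence a divisor of $\Lambda_1\nabla\Lambda_2$; transitivity of divisibility noted in Section~\ref{sec:Decomp} then closes the argument, no further input being required.
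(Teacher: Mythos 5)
Your proposal is correct and follows what is essentially the paper's intended argument: the paper states Theorem~\ref{th:divisor_of_TP} without proof, importing it from \cite{PGG}, and the proof there is exactly your route --- choose vector-space splittings $A\cong A_0\oplus (A/A_0)$, $B\cong B_0\oplus (B/B_0)$, use invariance of $A_0$, $B_0$ together with the hypothesis $a\ast\gamma\in B_0$ for $a\in A_0$ (used precisely where you use it, both for the invariance of $A_0\oplus B_0$ and for the well-definedness of the induced $\ast$ on $A/A_0$) to make each $\gamma$ act by a block-triangular matrix, so that the faithful $\Lambda$ becomes a cascade connection of $\Lambda_1$ and $\Lambda_2$, and then conclude by terminality of $\Lambda_1\nabla\Lambda_2$ in the category of cascade connections (Sections~\ref{sec:LA_rep_semigr}--\ref{sec:CC_and_TP}). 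One caveat, which is an inconsistency in the paper's conventions rather than a gap in your argument: with the matrix convention of Section~\ref{sec:LA_rep_semigr}, where the \emph{second} $\nabla$-factor is the invariant one, your flag actually yields the order quotient-first, sub-second, so the factors ``line up'' with the theorem's $\Lambda_1\nabla\Lambda_2$ only under the opposite ordering convention of \cite{PGG}.
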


Thus, $\Lambda$ is linearly decomposable into a triangular product $\Lambda_1$ and $\Lambda_2$.
One can prove also that an automaton $(A, \Gamma, B)$ is indecomposable into a triangular product (i.e., linearly indecomposable)  if and only if either this automaton is a semi-automaton $(A, \Gamma, 0)$ and the representation $(A, \Gamma)$ is irreducible, or it is $(0, \Gamma, B)$ and the representation $(B, \Gamma)$ is irreducible.

Let, further, $A$ and $B$ have $\Gamma$-composition series of lengthes $n$ and $m$, respectively.

\begin{theorem}\label{th:factors}
An automaton $(A, \Gamma, B)$ is linearly decomposable into $n+m$ indecomposable factors. All of them are semi-automata (representations).
\end{theorem}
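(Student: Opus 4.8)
The plan is to produce the decomposition by iterating Theorem~\ref{th:divisor_of_TP}: first separate the output space $B$ from the state space $A$, and then refine each of the resulting semi-automata along a $\Gamma$-composition series.

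First I would apply Theorem~\ref{th:divisor_of_TP} with $A_0 = 0$ and $B_0 = B$. Since $a \mapsto a \ast \gamma$ is $K$-linear, the compatibility condition $a \ast \gamma \in B_0$ holds vacuously for $a \in A_0 = 0$. The resulting faithful factors are $\Lambda_1 = (0, \Gamma, B)$ and $\Lambda_2 = (A, \Gamma, 0)$, both semi-automata, and the theorem yields $\Lambda | \Lambda_1 \nabla \Lambda_2$ together with $\Lambda_1 | \Lambda$ and $\Lambda_2 | \Lambda$. Thus $\Lambda$ is already linearly decomposable into the two semi-automata $(0,\Gamma,B)$ and $(A,\Gamma,0)$, carrying the representations $(B,\Gamma)$ and $(A,\Gamma)$ respectively.

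Next I would refine each piece. Fix a $\Gamma$-composition series $0 = A^{(0)} \subset \dots \subset A^{(n)} = A$. Applying Theorem~\ref{th:divisor_of_TP} to $(A,\Gamma,0)$ with $A_0 = A^{(n-1)}$ and $B_0 = 0$ (again the condition is vacuous, as $B = 0$) peels off the factor $(A^{(n)}/A^{(n-1)}, \Gamma, 0)$, whose representation is irreducible and which is therefore indecomposable by the criterion recorded after Theorem~\ref{th:divisor_of_TP}; inducting on $n$ decomposes $(A,\Gamma,0)$ linearly into the $n$ indecomposable semi-automata $(A^{(i)}/A^{(i-1)}, \Gamma, 0)$. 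The identical argument applied to a composition series of $B$ decomposes $(0,\Gamma,B)$ into the $m$ indecomposable semi-automata $(0, \Gamma, B^{(j)}/B^{(j-1)})$. Passing to faithful factors does not disturb irreducibility, since a subspace is invariant under $\Gamma$ exactly when it is invariant under the faithful quotient acting on the same module. I would then assemble the two refined chains into the single split $\Lambda | (0,\Gamma,B) \nabla (A,\Gamma,0)$, using the monotonicity of $\nabla$ under divisibility and the associativity of the triangular product (both employed in the proof of Lemma~\ref{le:tr}) exactly as in the transitivity argument there. This exhibits $\Lambda$ as a divisor of the triangular product of the $n + m$ irreducible factors, while each factor, being a subquotient of $A$ or of $B$, divides $\Lambda$ by transitivity of divisibility; hence $(A,\Gamma,B)$ is linearly decomposable into $n + m$ indecomposable factors, all of them semi-automata.

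The construction uses no single hard estimate; the only points requiring care are the repeated verification of the compatibility condition $a \ast \gamma \in B_0$ and of faithfulness at each split. These are painless here precisely because at every step one of $A_0$, $B_0$ is trivial, so the compatibility condition is automatic. The real content, and the step I expect to demand the most attention, is the inductive bookkeeping needed to merge the pairwise splits into one global linear decomposition and to confirm that the resulting count of indecomposable factors is exactly $n+m$.
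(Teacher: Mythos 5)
Your proposal is correct, and it is essentially the argument the paper intends: the paper states Theorem~\ref{th:factors} without proof (deferring to \cite{PGG}), but its surrounding text sets up exactly your route --- first split off $(0,\Gamma,B)$ from $(A,\Gamma,0)$ via Theorem~\ref{th:divisor_of_TP}, then peel along $\Gamma$-composition series, invoke the stated criterion that a faithful semi-automaton with irreducible representation is linearly indecomposable, and merge the splits by the monotonicity and associativity of $\nabla$ as in the proof of Lemma~\ref{le:tr}. Your device of always taking one of $A_0$, $B_0$ trivial, so that the compatibility condition $a \ast \gamma \in B_0$ is automatic at every step, is a clean way to handle the only delicate hypothesis.
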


The number $n+m$ is an invariant of the automaton. We can consider this number  as a measure of the linear complexity of an automaton. In particular, if $A$ and $B$ are finitely dimension spaces, then there are finite compositional series and Theorem \ref{th:factors} is applicable. Let, for example, $\Gamma$ act triangularly in $A$ and $B$ and $n$ and $m$ be dimensions of $A$ and $B$ respectively. Then the linear complexity of the automaton is $n+m$.

\begin{remark}

In the papers \cite{K}, \cite{KSt} the triangular (and unitriangular) matrix semigroups over a finite field are considered. These semigroups are finite and in \cite{K} the Krohn-Rhodes complexity of these semigroups are calculated.
The complexity does not depend on a ground field and equals $n$ for the matrices of rank $(n+1)$. This means that we consider the semigroup as an abstract semigroup and  use the operation of wreath product of semigroups for a decomposition. 
\end{remark}


In view of Theorem \ref{th:factors}, from here on we will treat solely semi-automata. Let $(A, \Gamma)$ be an irreducible semi-automaton. It is linearly  indecomposable but one may consider its further decomposition using wreath product. We need here some semigroup background \cite{CP}.

 A subset $H$ in $\Gamma$ is an ideal if it is invariant under left and right multiplications in  $\Gamma$. Consider semigroups with zero. Such a semigroup $\Gamma$ is called $0$-simple if $\Gamma$ has no ideals except $0$ and  $\Gamma$ itself and $\gamma ^2 \neq 0$ for some $\gamma\in \Gamma$. A semigroup $\Gamma$ is completely $0$-simple if it is $0$-simple and contains a primitive idempotent. If $\Gamma$ is finite, then there is no difference between $0$-simple and completely $0$-simple semigroups. If $\Gamma$ is a semigroup without zero and without proper ideals, then, adding zero, we get a completely $0$-simple semigroup. In particular, if $\Gamma$ is a group, then, adding zero, we get a completely $0$-simple semigroup.


According to the well-known Rees theory \cite{CP}, each completely 0-simple semigroup has a special Rees matrix representation $\Gamma = (X,G,Y,[X,Y])$ where $X$ and $Y$ are sets, $G$ is a group with formally added zero and $P=[X,Y]$ is a sandwich matrix. To every pair $x$ and $y$, $x \in X$, $y \in Y$, it corresponds an element $g=[y,x]\in G$ which is also an element of the matrix $P$. Elements of $\Gamma$ are represented as triples $(x,g,y)$ and multiplication is defined by the rule $(x_1, g_1, y_1) (x_2, g_2, y_2) = (x_1, g_1[y_1, x_2] g_2, y_2)$.

Take a set $M$ and relate to it two semigroups $M^l$ and $M^r$. In $M^l$ we have $m_1m_2=m_1$ for $m_1, m_2 \in M$, in $M^r$ analogously: $m_1m_2=m_2$. We
will use the pure semi-automaton $(M,M^r)$, $m_1\circ m_2=m_1m_2=m_2$  naturally arising here.

Let $\Gamma$ be a Rees matrix semigroup, $G$ a corresponding group and $(A_1, \Gamma)$ an irreducible representation. Then there exists  an irreducible representation $(A, G)$ which is a divisor of $(A_1, \Gamma)$ and:

\begin{theorem} \label{th:divisor} \cite{PGG}
In the pointed conditions we have
$$
(A_1, \Gamma)| (A,G) \ wr \ (Y,Y^r)
$$
for some $Y$.
\end{theorem}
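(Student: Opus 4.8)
The plan is to realize $(A_1,\Gamma)$ explicitly inside the wreath product by writing down a semigroup homomorphism $\Gamma\to G\,wr^Y\,\Sigma$ forced by the Rees structure, and then to invoke the representation theory of completely $0$-simple semigroups \cite{CP} to locate the irreducible module $A_1$ inside the induced module on which this homomorphism acts. First I would fix the Rees coordinates $\Gamma=(X,G,Y,[X,Y])$, elements $(x,g,y)$ with product $(x_1,g_1,y_1)(x_2,g_2,y_2)=(x_1,g_1[y_1,x_2]g_2,y_2)$, and spell out the target. Applying the construction of Section~\ref{sec:WP_LA_PA} to the semi-automaton $(A,G)$ and the pure semi-automaton $(Y,Y^r)$ gives
$$(A,G)\,wr\,(Y,Y^r)=(A\otimes KY,\ G\,wr^Y\,Y^r),\qquad G\,wr^Y\,Y^r=G^Y\leftthreetimes Y^r,$$
with action $(a\otimes y')\circ(\bar g,\eta)=(a\circ\bar g(y'))\otimes(y'\circ\eta)=(a\circ\bar g(y'))\otimes\eta$, the last equality because $y'\circ\eta=\eta$ in $(Y,Y^r)$. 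The guiding intuition is that the right coordinate $y$ of $(x,g,y)$ already behaves as a right zero (it is overwritten in every product), which is exactly what $Y^r$ models, while the reset mechanism $\bar g\circ\eta$ of the wreath product is what will supply the sandwich entry.

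Second, I would define $\psi\colon\Gamma\to G\,wr^Y\,Y^r$ by $\psi(x,g,y)=(\bar h_{x,g},y)$, where $\bar h_{x,g}\colon Y\to G$ is $\bar h_{x,g}(y')=[y',x]\,g$; since here $G$ is the group with a formally adjoined zero, this is defined even where a sandwich entry vanishes. To check that $\psi$ is a homomorphism one computes, using the semidirect multiplication of Section~\ref{sec:casc_con_pure_atm},
$$\psi(x_1,g_1,y_1)\,\psi(x_2,g_2,y_2)=\bigl(\bar h_{x_1,g_1}\cdot(\bar h_{x_2,g_2}\circ y_1),\ y_2\bigr),$$
and since $(\bar h_{x_2,g_2}\circ y_1)(y')=\bar h_{x_2,g_2}(y_1)=[y_1,x_2]g_2$ is constant in $y'$, the first coordinate is $y'\mapsto[y',x_1]g_1[y_1,x_2]g_2=\bar h_{x_1,\,g_1[y_1,x_2]g_2}(y')$, which is exactly the first coordinate of $\psi\bigl((x_1,g_1,y_1)(x_2,g_2,y_2)\bigr)$. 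Thus $\psi$ is a semigroup homomorphism, with the right-zero reset producing precisely the $[y_1,x_2]$ factor demanded by the Rees product.

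Third comes the module side. Pulling the wreath action back along $\psi$ turns $A\otimes KY$ into the $\Gamma$-module with $(a\otimes y')\circ(x,g,y)=(a\circ[y',x]g)\otimes y$, which is precisely the induced (Schützenberger) representation of $\Gamma$ attached to $(A,G)$. Because $(A_1,\Gamma)$ is faithful, its $\Gamma$-action factors through $\psi$, which forces $\psi$ to be injective; hence $(A\otimes KY,\Gamma)$ is (isomorphic to) a sub-semi-automaton of $(A,G)\,wr\,(Y,Y^r)$. By the Clifford--Preston/Rees correspondence \cite{CP} between irreducible representations of $\Gamma$ and of its structure group $G$, the irreducible $(A_1,\Gamma)$ whose associated group representation is $(A,G)$ is a divisor (sub-quotient) of this induced module. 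Transitivity of the divisor relation, recorded just before Lemma~\ref{le:tr}, then yields $(A_1,\Gamma)\,|\,(A,G)\,wr\,(Y,Y^r)$, with $Y$ the Rees column-index set (this is the $Y$ in the statement).

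The \textbf{main obstacle} is the representation-theoretic identification in the third step: the element $(A,G)$ is handed to us only as \emph{a} divisor of $(A_1,\Gamma)$, and what must be verified is that it is exactly the apex (structure-group) representation realizing $(A_1,\Gamma)$ as a sub-quotient of the induced module $A\otimes KY$, so that the Clifford--Preston correspondence applies with this particular $(A,G)$. By contrast the homomorphism check for $\psi$, its injectivity coming from faithfulness, and the matching of the two actions on $A\otimes KY$ are direct computations; the potential difficulty of zero sandwich entries and of the zero of $\Gamma$ is dissolved at the outset by working with $G$ as the group with adjoined zero (its zero acting as the zero endomorphism of $A$), which makes $\psi$ total and action-compatible on the nose.
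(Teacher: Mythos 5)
You should first note that the paper does not actually prove Theorem~\ref{th:divisor}: it is quoted from \cite{PGG} with no argument given. So there is no internal proof to compare against; what one can say is that your route --- the row-monomial (Sch\"utzenberger-type) map of the Rees matrix semigroup into $G \ wr^Y \ Y^r$, followed by the Clifford--Preston/Munn identification of $A_1$ as a sub-quotient of the induced module $A\otimes KY$ --- is exactly the standard mechanism behind the cited result, and your computation that $\bar h_{x,g}(y')=[y',x]g$ reproduces the Rees product via the right-zero reset is correct. Your honest flagging of the apex identification (that the given divisor $(A,G)$ is the structure-group representation for which the correspondence applies, e.g.\ $A = A_1\circ e$ with $G\cong e\Gamma e\setminus\{0\}$ for an idempotent $e$ in the nonzero $\mathcal J$-class) as the step delegated to \cite{CP} is acceptable, since the paper itself delegates the whole theorem.

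There is, however, one concrete flaw in your handling of zero, and your proposed fix does not cure it. When the sandwich matrix has zero entries, $\Gamma$ has a single zero element, but $G\ wr^Y\ Y^r$ has none: the elements $(\bar 0, y)$ are pairwise distinct and form a right-zero ideal, since $(\bar 0,y)(\bar\gamma,y')=(\bar 0,y')$. Hence $\psi$ admits no well-defined value at $0_\Gamma$ --- adjoining $0$ to $G$ makes $\bar h_{x,0}=\bar 0$ meaningful but still forces $\psi\bigl((x_1,g_1,y_1)(x_2,g_2,y_2)\bigr)=(\bar 0,y_2)$ to depend on $y_2$ when $[y_1,x_2]=0$ --- so ``$\psi$ is total and a homomorphism on the nose'' is false in the degenerate case. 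The repair is cheap and in fact already implicit in your own framework: run the argument as a \emph{division} rather than an embedding. The subset $T=\{(\bar h_{x,g},y)\}\cup\{(\bar 0,y):y\in Y\}$ is a subsemigroup of $G\ wr^Y\ Y^r$, and collapsing all $(\bar 0,y)$ to $0_\Gamma$ gives a surjective homomorphism $T\to\Gamma$ compatible with the actions on $A\otimes KY$ and on its sub-quotient $A_1$; since the divisor relation of Section~\ref{sec:Decomp} asks only for a homomorphic image of a subalgebra, this yields $(A_1,\Gamma)\,|\,(A,G)\ wr\ (Y,Y^r)$ directly. This also dissolves your injectivity step, which as written is circular (you infer that the $\Gamma$-action factors through $\psi$ from the very sub-quotient claim being proved, then use faithfulness to get injectivity): with division in place of embedding, injectivity of $\psi$ is simply not needed.
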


Since irreducible semi-automata are the atoms of the triangular (linear) decomposition of a linear automaton $\Lambda=(A,\Gamma, B)$, this theorem yields a further decomposition of $\Lambda$ for the case $\Gamma$ being a completely $0$-simple semigroup.

We recall a reduction of the general case to the situation of completely $0$-simple semigroups \cite{CP}. 
Let $(A, \Gamma)$ be an irreducible representation with a finite semigroup $\Gamma$. Take a two-sided ideal $U$ in $\Gamma$, consisting of elements of $\Gamma$ acting in $A$ as zero, i.e., $a \circ \gamma=0$ for $\gamma \in U$ and each $a \in A$. Let, further, $V\neq U$ be a minimal two-sided ideal in $\Gamma$ containing $U$.
 There arises a representation $(A,V)$. Prove that it is irreducible as well. Pass to a semigroup algebra $KV$. Together with $(A,V)$ we have a representation $(A,KV)$. For  a nonzero element $a$ in $A$ consider a subspace $a \circ KV$. It is invariant under $V$. Since $V$ is a two-sided ideal, the space is invariant under $\Gamma$. Hence,  $a \circ KV=A$ or $a \circ KV=0$. Consider all possible $a \in A$ with $a \circ KV=0$ and let $A_0$ be a subspace generated by all such elements $a$. $A_0$ is also invariant under $\Gamma$, namely, $(a \circ \gamma)\circ v=a\circ\gamma v=0$ for $\gamma \in \Gamma$, $v \in V$ and $a \in A_0$. So, $A_0 =0$ or $A_0 =A$. If $A_0=0$ or $a \circ KV=0$, then $a \circ KV=0$ implies $a \in A_0$ and $a=0$. We proceed from a nonzero $a$ and, thus, $a \circ KV\neq 0$, i.e., $a \circ KV=A$.

Let now $A_0=A$. Then $A$ is generated by elements $a$ with $a \circ KV=0$. Therefore, $a \circ KV=0$ for any $a \in A$. In particular, $a \circ v=0$ always holds true. This contradicts the condition $U \neq V$. Hence, the case $A_0=A$ is impossible.

So,  $a \circ KV=A$, we have an irreducible representation $(A,V)$. 
Take a semigroup $\Sigma = V / U$. It is clear that $\Sigma$ acts in $A$, it acts irreducibly, this semigroup is 0-simple and it is finite. We say that an irreducible representation $(A, \Gamma)$ with the finite semigroup $\Gamma$ is {\it compressed} into an irreducible representation $(A,\Sigma)$ with a completely 0-simple finite semigroup $\Sigma$.  Clearly,  $(A,\Sigma) | (A, \Gamma)$, and thus, $(A,G) | (A, \Gamma)$.

The introduced construction allows us to reduce the general theory with the finite $\Gamma$ to the situation of completely 0-simple $\Gamma$.

The next step of the decomposition process is the theorem \cite{PGG}  which deals with irreducible representations $(A,G)$ with the finite group $G$.

\begin{theorem}\label{th:irred_rep}
Let $(A,G)$ be an irreducible finitely dimension representation of a finite group $G$. Then
$$
(A,G) | (\nabla_i (A_i, H)) \ wr \ (X, \Phi),$$
\noindent
where all $(A_i,H)$ are irreducible representations of a simple group $H$, all of them are divisors of the representation $(A,G)$ and $(X,\Phi)$ is a representation of the group $\Phi$ on the set $X$. Everything is finite.
\end{theorem}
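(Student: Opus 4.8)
The plan is to isolate a single simple composition factor $H$ of $G$ by combining Clifford's theorem (which controls the linear data) with the Kaloujnine--Krasner embedding of a group extension into a wreath product (which controls the combinatorial data), recording the residual ``inner'' representation as a triangular product of irreducible $H$-modules. The monotonicity of $\nabla$ and of the wreath product under the divisibility relation, together with Theorem~\ref{th:factors}, will then let me chain everything into the required divisor relation.

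First I would take a minimal normal subgroup $N \trianglelefteq G$. Being characteristically simple, it is a direct product $N \cong H \times \cdots \times H$ of $k$ copies of a single simple group $H$, and $G$ permutes these factors transitively by conjugation. Applying Clifford's theorem to $(A,N)$ exhibits $A$ as a direct sum of its $N$-isotypic components, which form a single transitive $G$-orbit; letting $X$ be the set of these components and $\Phi$ the image of $G$ in $\operatorname{Sym}(X)$ produces the pure representation $(X,\Phi)$, with $X \cong G/T$ for the inertia (stabilizer) subgroup $T$ of a fixed component. On the underlying space one has $A \cong \operatorname{Ind}_T^G W$ for an irreducible $T$-representation $W$ lying over that component.

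Next I would realize $\operatorname{Ind}_T^G W$ inside a wreath product: its space $\bigoplus_{x \in X} W$ carries the $G$-action that permutes summands through $(X,\Phi)$ while acting on each summand through a conjugate of $W$, which is exactly the shape of $\Lambda \ wr \ (X,\Phi)$ with $\Lambda$ the inner factor. The Kaloujnine--Krasner embedding $G \hookrightarrow N \ wr \ (G/N)$, lifted to the module level, then yields the divisor relation placing $(A,G)$ as a subquotient of such a wreath product. To see that $\Lambda$ is a triangular product of irreducible $H$-representations I would restrict the inner representation to a single simple factor $H \cong S_1 \subset N$, take a composition series, and invoke Theorem~\ref{th:factors}: the representation divides the triangular product $\nabla_i (A_i,H)$ of its composition factors, each $(A_i,H)$ irreducible and a divisor of $(A,G)$. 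Finally the monotonicity of $\nabla$ and of the wreath product under divisibility recorded in Section~\ref{sec:Decomp} lets me chain these relations into $(A,G) \mid (\nabla_i (A_i,H)) \ wr \ (X,\Phi)$, with all groups, sets and spaces remaining finite.

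The main obstacle is the third step: reconciling the purely combinatorial wreath construction, whose base space is the tensor product $A_i \otimes KX$, with the linear algebra of induction and of Clifford's extension problem. Concretely, one must verify that the permutation of the $N$-isotypic components is faithfully the pure action $(X,\Phi)$, that the inertia data relating $N \cong H^k$ to $T/N$ is absorbed by the (generally non-split) triangular gluing rather than forcing a larger inner group, and that a single simple factor $H$ suffices once the wreath product has reconstructed the action of all $k$ conjugate factors. These are precisely the points where the decomposition $N \cong H^k$ and the associativity of the wreath product (item~1 of Section~\ref{sec:WP_LA_PA}) are essential.
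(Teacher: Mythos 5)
Your first two steps are sound (Clifford's theorem on isotypic components, and the realization of the induced module $\mathrm{Ind}_T^G W$ inside a wreath product with pure part $(X,\Phi)$), but the third step --- the one you yourself flag as ``the main obstacle'' --- is not a verification you can postpone: it is false as stated. In a triangular product $\nabla_i (A_i,H)$ the flag $0 \subset A_1 \subset A_1\oplus A_2 \subset \cdots$ is invariant under the \emph{entire} acting semigroup of triangular matrices, so any divisor whose underlying module is irreducible is forced to be a divisor of a single factor: its space is a subquotient of one $A_j$ and its acting group acts through a divisor of $H$. Since your $A_i$ are composition factors of $W$ restricted to a single copy $H \cong S_1 \subset N$, the relation $(W,T)\,|\,\nabla_i(A_i,H)$ would force $W$ to be already $H$-irreducible and the image of $T$ in $GL(W)$ to be a quotient of a subgroup of $H$. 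So the triangular gluing cannot absorb the inertia action $T/N$, nor the other $k-1$ conjugate copies of $H$ inside $N \cong H^k$; those can only go into the pure wreath factor. Replacing $(W,T)$ by its restriction $(W,H)$ breaks the divisor chain instead, because $(W,H)\ wr\ (X,\Phi)$ carries only $H^X \rtimes \Phi$, and $\Phi$ records merely the permutation of the isotypic blocks. The degenerate case makes the failure vivid: if $A$ restricted to $N$ is isotypic (e.g.\ $N$ central), then $X$ is a point, $T = G$, your wreath step makes no progress at all, and classical Clifford theory would now demand projective representations, which the $\nabla$/$wr$ calculus of this paper does not supply.

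The paper takes the theorem from \cite{PGG} and sketches the actual route in Section \ref{sec:Complexity}: one runs the reduction down an \emph{entire} composition series $G \supset H_1 \supset \cdots \supset H_{k-1} \supset 1$ rather than stopping after one minimal normal subgroup. At each step one has a Kaloujnine--Krasner-type divisor relation $(A,H_i)\,|\,(A,H_{i+1})\ wr\ (X_i,G_i)$ with $G_i = H_i/H_{i+1}$ simple and regular pure part, valid with no inertia hypothesis whatsoever; the associativity $(\Lambda\ wr\ \Psi_1)\ wr\ \Psi_2 = \Lambda\ wr\ (\Psi_1\ wr\ \Psi_2)$ (item 1 of Section \ref{sec:WP_LA_PA}) then collapses the $k-1$ pure layers into the single $(X,\Phi)$ of the statement. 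Only at the bottom, where $H = H_{k-1}$ is simple and $(A,H)$ is completely reducible (restriction of a completely reducible representation along a subnormal chain), does the linear operation enter: $(A,H) = \oplus_s A_s$ with each $(A_s,H)$ irreducible and a divisor of $(A,G)$, and the parallel connection divides the triangular product, giving $(A,H)\,|\,\nabla_s(A_s,H)$; the monotonicity of $\nabla$ and $wr$ under divisibility (Section \ref{sec:Decomp}) finishes the chain. So your instinct that associativity is essential is correct, but it must be used to push \emph{all} composition factors above the last one into the pure part; the triangular product only ever accounts for the semisimple splitting over the single bottom simple group, never for extension or inertia data.
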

\begin{defn}
A semi-automaton $(A,\Gamma)$ with a finite semigroup $\Gamma$ is called decomposable if it is decomposable with respect to triangular products and  wreath products with a pure one.
\end{defn}

\begin{theorem}\label{th:indecomposable_semi-atm}
A semi-automaton $(A,\Gamma)$ is indecomposable  if and only if it is an irreducible  representation of a simple group $G$.
\end{theorem}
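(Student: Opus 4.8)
The plan is to prove the two implications separately, synthesizing the structural results already recorded in this section: the characterization of linearly indecomposable automata stated just after Theorem \ref{th:factors}, the compression of an irreducible representation to a completely $0$-simple one, and the two successive wreath decompositions of Theorems \ref{th:divisor} and \ref{th:irred_rep}. The guiding principle is that indecomposability with respect to the triangular product forces irreducibility, while indecomposability with respect to the wreath product (with a pure automaton) forces the acting semigroup to be, after compression, a simple group; conversely, each nontrivial step of the decomposition process of this section is a genuine decomposition, so indecomposability is exactly the condition that all such steps degenerate.

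For the direction that an irreducible representation $(A,G)$ of a simple group $G$ is indecomposable, I would first note that irreducibility means $A$ has no proper $G$-invariant subspace, so by the characterization recorded after Theorem \ref{th:factors} the semi-automaton $(A,G)$ admits no nontrivial triangular decomposition. It remains to exclude a wreath decomposition, and here I would apply Theorem \ref{th:irred_rep}: any such decomposition is governed by a simple group $H$ that divides $G$ together with a pure representation $(X,\Phi)$. Since $G$ is simple, the only simple group dividing it nontrivially is $G$ itself, so $H=G$, the single factor is $(A,G)$ again, the group $\Phi$ collapses to the trivial group, and $(X,\Phi)$ becomes trivial. The decomposition then reduces to $(A,G)$ dividing itself, which is not a genuine decomposition; hence $(A,G)$ is indecomposable.

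For the converse, suppose $(A,\Gamma)$ is indecomposable. Being indecomposable with respect to triangular products, it is in particular linearly indecomposable, so by the same characterization $(A,\Gamma)$ is an irreducible representation. I would then invoke compression: an irreducible representation of a finite semigroup compresses to an irreducible representation $(A,\Sigma)$ with $\Sigma$ completely $0$-simple, described by Rees data $(X,G,Y,[X,Y])$, and Theorem \ref{th:divisor} gives $(A,\Gamma)\,|\,(A,G)\ wr \ (Y,Y^r)$ for an irreducible group representation $(A,G)$. If $\Sigma$ is not a group, the index set $Y$ is not a singleton, the factor $(Y,Y^r)$ is nontrivial, and this is a genuine wreath decomposition, contradicting indecomposability; hence $\Sigma$ is a group and we are reduced to treating an irreducible group representation $(A,G)$. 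Applying Theorem \ref{th:irred_rep} to $(A,G)$, if $G$ were not simple it would admit a proper simple divisor $H \neq G$ and a nontrivial pure representation $(X,\Phi)$, yielding again a genuine decomposition $(A,G)\,|\,(\nabla_i (A_i,H))\ wr \ (X,\Phi)$ and contradicting indecomposability. Therefore $G$ is simple and $(A,\Gamma)$ is an irreducible representation of a simple group, as claimed.

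The main obstacle I expect is pinning down precisely when the decompositions produced by Theorems \ref{th:divisor} and \ref{th:irred_rep} are genuine rather than the trivial case of an automaton dividing itself: one must verify that the wreath factors $(Y,Y^r)$ and $(X,\Phi)$ degenerate to one-point (trivial) automata exactly when $\Sigma$ is a group and when $G$ is simple, respectively, so that the failure of either condition yields an honest decomposition. A secondary delicate point is the bookkeeping between compression and the divisor relation: since compression a priori produces a divisor $(A,\Sigma)$ of $(A,\Gamma)$, I would need to check that the resulting wreath decomposition genuinely witnesses the decomposability of $(A,\Gamma)$ itself, using $(A,\Gamma)\,|\,(A,G)\ wr \ (Y,Y^r)$ together with the compatibility of the divisor relation with the wreath product established in Section \ref{sec:Decomp}, rather than merely decomposing the compressed divisor.
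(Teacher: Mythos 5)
Your converse direction is essentially sound and follows the route the paper intends (note that the paper itself states Theorem \ref{th:indecomposable_semi-atm} without proof, deferring to \cite{PGG}): indecomposability with respect to triangular products gives irreducibility via the characterization recorded after Theorem \ref{th:factors}, compression plus Theorem \ref{th:divisor} handles non-group $\Sigma$, and Theorem \ref{th:irred_rep} handles non-simple $G$. One small repair: $\Sigma$ failing to be a group does not by itself force the index set $Y$ in the Rees data to be non-singleton (take $|X|>1$, $|Y|=1$); in the degenerate case $Y$ a singleton you obtain $(A,\Gamma)\,|\,(A,G)$ together with $(A,G)\,|\,(A,\Gamma)$, and the right move is to conclude directly that the faithful image of $\Gamma$ acting on $A$ is the group $G$, rather than to seek a contradiction.

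The genuine gap is in the forward direction. Indecomposability requires excluding \emph{every} correct decomposition, but you only verify that the particular decomposition \emph{produced} by Theorem \ref{th:irred_rep} degenerates when $G$ is simple; that theorem is an existence statement, built from one chosen composition series, not a classification of all triangular-plus-wreath decompositions, so its degeneration says nothing about other candidate decompositions. Worse, the group-theoretic assertion you lean on --- that the only simple group dividing a simple $G$ is $G$ itself --- is false: divisibility means being a quotient of a subgroup, so for nonabelian simple $G$ (say $G=A_5$) every cyclic group $\Z_p$ with $p$ dividing the order of $G$ is a proper simple divisor of $G$. What the forward implication actually needs is a Krohn--Rhodes-type primality lemma (as in \cite{KR}, \cite{PGG}): if a simple group $G$ divides a wreath product $\Gamma_1\ wr^{X}\ \Gamma_2$, or the acting semigroup of a triangular product, then $G$ divides one of the factors. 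Combined with irreducibility of $(A,G)$, which by the characterization after Theorem \ref{th:factors} rules out proper triangular splitting, this forces any correct decomposition $\nabla_i(\Lambda_i\ wr\ \Phi_i)$ of $(A,G)$ to contain a factor essentially equal to $(A,G)$, i.e., to be trivial. Without such a primality argument the forward direction remains unproved as written.
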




\section{Complexity}\label{sec:Complexity}
As we know, along with the absolutely non-decomposable linear automata  the indecomposable factors for pure automata  have the form $(X,G)$ with a simple group $G$, which acts transitively on $X$. Recall that in the linear case these are irreducible representations $(A,G)$ with a simple group $G$. All these $(A,G)$ and $(X,G)$ are atoms of the corresponding decompositions.

In the decomposition process we use several operations and constructions.  Let us underline once again that the following operations are used:

\begin{itemize}
\item[1) ]{Triangular product of linear automata.}

\item[2) ]{Wreath product of a linear automaton with a pure one.}

\item[3) ]{Wreath product of pure automata.}
\end{itemize}

Keeping in mind an arbitrary finite semigroup $\Gamma$, we use a compressing operation of an irreducible automaton $(A, \Gamma)$ to an irreducible automaton $(A, \Sigma)$ with 0-simple semigroup $\Sigma$.

\begin{defn}
We propose the complexity of an automaton to be a minimal number of the used operations in the decompositions of the automaton  into indecomposable components.
\end{defn}

\begin{remark} It is possible to define the complexity of a linear automaton to be the minimal number of linear atoms, i.e., the number of linear irreducible representations of simple groups in the decompositions of the automaton into indecomposable components. It is also possible to define the complexity (group complexity) of a linear automaton to be the minimal number of group atoms, i.e., the number of linear irreducible representations of simple groups in the decompositions of the automaton into indecomposable components plus the number  of transitive representations of simple groups in these decompositions.

We have chosen for the definition the minimal number of the operations used in decompositions because it corresponds to standard algorithmic approach to complexity. All three approaches give different numbers which are tightly related to each other.
\end{remark}

Let us outline the {\it general decomposition strategy}.  A given  linear automaton $\Lambda=(A, \Gamma, B)$ with a finite semigroup $\Gamma$ and finitely dimension $A$ and $B$ can be decomposed into indecomposable automata in respect to the triangular product of automata operation. These  indecomposable automata are irreducible representations of the form $(A, \Gamma)$. The number of such automata is determined by the lengthes of compositional series in respect to $\Gamma$ in $A$ and $B$. It is the sum of such lengthes, say, n. The corresponding number of operations $\nabla$ is $n-1$.

Now apply the compressing operation in every irreducible representation $(A, \Gamma)$ and pass to $(B, \Sigma)$ with 0-simple $\Sigma$. We add $n$ operations of compression and the total number of operations will be $n-1+n=2n-1$. For every such $\Sigma$ take the corresponding group $G$ (arising from the matrix representation $(X,G,[X,Y],Y)$ of $\Sigma$ ) and a corresponding set $A_1\subset A$. Now take an irreducible representation $(A_1, G)$ which is  a divisor of $(B, \Sigma)$. 
We have $(A, \Gamma) | (A_1, G)\ wr \ (Y, Y^r)$. We add $n$ wreath products, which sums up to $3n-1$ operations.

Further we decompose these representations $(A,G)$.  Take a composition series $G \supset H_1 \supset \ldots \supset H_{k-1} \supset 1$ in $G$. All factors here are simple groups. We have $(A, G) | (A, H_{k-1}) \ wr \ (X, \Phi)$ where $(X, \Phi)$ is $k-1$ pure wreath products of the form $(X_i, G_i)$ with all $G_i$ being simple groups. So, for each of $n$ representations there are added corresponding $k-1$ pure indecomposable wreath products. Now we use the well known fact that if $(A,G)$ is a completely reducible representation and $H$ is a normal subgroup in $G$ then the representation $(A,H)$ is also completely reducible.
This gives completely reducible decomposition of the representation $(A, H_{k-1})$ into irreducible components $(A_s, H_{k-1})$, and the number of such representations depends on the initial representation $(A, H_{k-1})$. To this decomposition it corresponds a triangular decomposition for $(A, H_{k-1})$ of the form $\nabla_s (A_s, H_{k-1})$. Each factor is an irreducible linear representation of a simple group.

So, in the series of decompositions we reach simple (indecomposable) components. The total number of the used operations can be computed. It is minimal, and it is the complexity of $\Lambda $.

Along with this method we could use another one with the greater number of operations. In particular, this could occur if we apply the rule
$$ (\Lambda_1\nabla \Lambda_2) \ wr \ \Psi \subset (\Lambda_1 \ wr \ \Psi) \nabla (\Lambda_2 \ wr \ \Psi) $$
on one of the previous steps. The number of operations increases.

Note also that in the calculations above we used the transitivity property for correct decomposition (see Lemma \ref{le:tr}).

Consider an example. Let an irreducible representation $(A,G)$ with the group $G$ having a simple invariant subgroup $H$ with a simple quotient group $G / H$ be given. Let us decompose it and calculate the complexity.
We have completely reducible representation $(A,H)$. Let $A = A_1 \oplus A_2 \oplus A_3$ be a corresponding decomposition, all $(A_i,H)$ being irreducible and absolutely simple. Denote $G_1 =G / H$ and take a set $G_1=X$. We have a simple representation $(X, G_1)$. For $(A,G)$ we have a decomposition
$$(A,G)|(A,H) \  wr \ (X,G_1).$$
Further,
$$(A,H)|(A_1,H) \nabla (A_2,H) \nabla (A_3,H)$$
and all $(A_i,H)$ are divisors of $(A,H)$. Finally, we have
$$(A,G)|((A_1,H) \nabla (A_2,H) \nabla (A_3,H))\ wr \ (X, G_1).$$
All $(A_i,H)$ are divisors of $(A,G)$, and the complexity is 3.

We have also
$$(A,G)|\nabla_i((A_i,H) \ wr \ (X,G_1))$$
where the number of operations is 5.



\end{document}

Krohn, Rhodes
PGG -Plotkin, Greenglaz, Gvaramia
Statja Zheni
Klifford -Preston
Eilenberg
Stat'i po slozhnosti (2 stat'i)
Formal languages   LOGIC MEETS ALGEBRA: THE CASE OF REGULAR LANGUAGES.
PASCAL TESSON AND DENIS THERIEN Logical Methods in Computer Science
Vol. 3 (1:4) 2007, pp. 1–1–37
Grigorchuk (Groups Theory)
Automata, dynamical systems and infinite groups, with V.V.Nekrashevich, V.I.Sushchanskii, Proc. Steklov Inst. Math. v.231 (2000), 134-214.

\section{Another model}\label{sec:model}

Let us note that, aiming at applications, let us point one more model of a pure automaton, namely, a triple of the form $(A, X, Y)$ with the operations $a \circ x \in A$ and $a \ast x \in Y$. Here $A$ is a set of states, $X$ input signals, $Y$ output signals. To such pure automaton it corresponds a semigroup one $(A, F(X), F(Y))$, where $F(X)$, $F(Y)$ are free semigroups.

In the general case we consider a triple $(A, \Gamma, \Sigma)$ where $\Gamma$ and $\Sigma$ are semigroups. The axioms arising here slightly differ from the previous ones due to the semigroup nature of the input and output signals:
$$a \circ \gamma_1 \gamma_2 = (a \circ \gamma_1) \circ \gamma_2,$$
$$a \ast \gamma_1 \gamma_2 = (a \ast \gamma_1) ((a \circ \gamma_1) \ast \gamma_2).$$
A meaningful theory with rich inner logic can be constructed for such automata. It is applied in formal languages theory \cite{} and in groups theory \cite{}.

The automaton $(A,\Gamma,\Sigma)$ can be built also in the following way. Let two pure representations $(A,\Gamma)$ and $(B,\Sigma)$ be given. Consider its serial connection $(A\times B, \Gamma)$. It is defined by the map $\alpha:A\times\Gamma\to\Sigma$ with the condition
$$
 \alpha(a,\gamma_1\gamma_2)=\alpha(a,\gamma_1)\alpha(a\circ \gamma_1,\gamma_2).
 $$

Define $ \alpha(a,\gamma)=a\ast\gamma$. Then
$$
 \alpha\ast\gamma_1\gamma_2=(a,\ast\gamma_1)((a\circ \gamma_1)\ast\gamma_2).
 $$
Here the map $\bar a:\Gamma\to\Sigma$ defined by $\bar a(\gamma)=a\ast\gamma$ corresponds to each $a\in A$. This map is not correlated with the multiplications in $\Gamma$ and $\Sigma$.

Action of the semigroup $\Gamma$ in $A\times B$ is defined by the rule
$$
(a+b)\circ\gamma=(a\circ\gamma)+(b\cdot(a\ast\gamma)).
$$
So we have a triple $(A,\Gamma,\Sigma)$ with the action of $\Gamma$ in $A$ defined by $a\circ\gamma$ and with $a\ast \gamma \in\Sigma$. An action of $\Sigma$ in $B$ is ignored. This action arises in the serial connection $(A,\Gamma)\to (B,\Sigma)$.

Ho95
Howie, John M. Fundamentals of Semigroup Theory. Oxford University Press, 1995.

The definition of the correct decomposition is well correlated
with the definition of divisibility of automata which also possesses the transitivity property. It is important that divisibility is related usually related to operations which determine the cascade connection of automata.

We relate the idea of correct decomposition with the divisibility of algebras. The definition of complexity and the corresponding decomposition problem are based on this relation since divisibility implies a  representation of the automaton as a cascade connection of more simple ones. On the other hand one can realize a cascade connection in the corresponding terminal object. These terminal objects are triangular products and wreath products.

They are operations of triangular product and multiplication of a linear automaton by a pure one, compression construction, and a concrete
pure automaton of the form $(X, X^r)$. All of them are components of an automaton $(A,\Gamma)$ with a finite 0-simple semigroup $\Gamma$  and finitely dimension space $A$.

"	Ésik, Z. (2005), "A proof of the Krohn-Rhodes Decomposition Theorem", Theoretical Computer Science, 234:287-300